\theoremstyle{plain}
\newtheorem{theorem}{Theorem}
\newtheorem{corollary}{Corollary}
\newtheorem*{thm-cheb}{Theorem (Chebyshev)}
\newtheorem {lemma}{Lemma}
\newtheorem{proposition}{Proposition}
\newtheorem*{2'}{Theorem 2'}
\newtheorem*{3'}{Theorem 3'}
\theoremstyle{remark}
\newtheorem*{Remark 1}{Remark 1}
\newtheorem*{Remark 2}{Remark 2}
\newtheorem*{Remark 3}{Remark 3}
\newtheorem*{Remark 4}{Remark 4}
\numberwithin{equation}{section}
\begin{document}

\title[Infinite Limit of Random Permutations Avoiding  Three-Patterns  ]
{The Infinite  Limit of Random Permutations Avoiding  Patterns of Length Three }
\author{Ross G. Pinsky}


\address{Department of Mathematics\\
Technion---Israel Institute of Technology\\
Haifa, 32000\\ Israel}
\email{ pinsky@math.technion.ac.il}

\urladdr{http://www.math.technion.ac.il/~pinsky/}

\subjclass[2000]{60C05, 60B10, 05A05} \keywords{pattern-avoiding permutation, random permutation, infinite permutation, pattern of length three}
\date{}

\begin{abstract}
For $\tau\in S_3$, let $\mu_n^{\tau}$ denote the uniformly random probability measure on the set
of $\tau$-avoiding permutations in $S_n$.
Let $\mathbb{N}^*=\mathbb{N}\cup\{\infty\}$ with an appropriate metric and denote by
 $S(\mathbb{N},\mathbb{N}^*)$ the compact metric space consisting  of functions
$\sigma=\{\sigma_i\}_{ i=1}^\infty$ from
$\mathbb{N}$ to $\mathbb{N}^*$  which
are injections when restricted to $\sigma^{-1}(\mathbb{N})$\rm; that is, if $\sigma_i=\sigma_j$, $i\neq j$, then
$\sigma_i=\infty$.
Extending permutations $\sigma\in S_n$
by defining $\sigma_j=j$, for $j>n$, we have $S_n\subset
S(\mathbb{N},\mathbb{N}^*)$.
For each $\tau\in S_3$, we study the limiting behavior of the measures $\{\mu_n^{\tau}\}_{n=1}^\infty$
on $S(\mathbb{N},\mathbb{N}^*)$.
We obtain partial results for the permutation $\tau=321$ and  complete results for the other five  permutations $\tau\in S_3$.

\end{abstract}
\maketitle

\section{Introduction and Statement of Results}
We recall the definition of pattern avoidance for permutations. Let $S_n$ denote the set of permutations of $[n]:=\{1,\cdots, n\}$.
If $\sigma=\sigma_1\sigma_2\cdots\sigma_n\in S_n$ and $\tau=\tau_1\cdots\tau_m\in S_m$, where $2\le m<n$,
then we say that $\sigma$ contains $\tau$ as a pattern if there exists a subsequence $1\le i_1<i_2<\cdots<i_m\le n$ such
that for all $1\le j,k\le m$, the inequality $\sigma_{i_j}<\sigma_{i_k}$ holds if and only if the inequality $\tau_j<\tau_k$ holds.
If $\sigma$ does not contain $\tau$, then we say that $\sigma$ \it avoids\rm\ $\tau$.
We consider here permutations on $S_n$ that avoid a pattern $\tau\in S_3$.
Denote by $S_n(\tau)$  the set of permutation in $S_n$ that avoid $\tau$. It is well-known  that
$|S_n(\tau)|=C_n$,  for all six permutations $\tau\in S_3$, where $C_n=\frac{\binom{2n}n}{n+1}$ is the $n$th
Catalan number \cite{B}.
Let $\mu_n^\tau$ denote the uniformly random probability measure on $S_n(\tau)$.
In this paper we investigate the limiting behavior of the probability measures $\mu_n^\tau$ as $n\to\infty$.
In the limit we will obtain a probability measure not on the set of permutations of $\mathbb{N}:=\{1,2,\cdots\}$,
but on a more general structure which we now describe.

Let $\mathbb{N}^*=\mathbb{N}\cup \{\infty\}$ with the metric $d_{N^*}(i,j)=\sum_{k=i}^{j-1}2^{-k}$, for $1\le i<j\le \infty$.
Denote by $S(\mathbb{N},\mathbb{N}^*)$ the set of functions
$\sigma=\{\sigma_i\}_{ i=1}^\infty$ from
$\mathbb{N}$ to $\mathbb{N}^*$ \it which
are injections when restricted to $\sigma^{-1}(\mathbb{N})$\rm; that is, if $\sigma_i=\sigma_j$, $i\neq j$, then
$\sigma_i=\infty$.
Let $S(\mathbb{N},\mathbb{N})\subset S(\mathbb{N},\mathbb{N}^*)$ denote the subset of injections
from $\mathbb{N}$ to $\mathbb{N}$,   let
 $S_{\text{sur}}(\mathbb{N},\mathbb{N}^*)\subset S(\mathbb{N},\mathbb{N}^*)$
 denote the subset of surjections from
$\mathbb{N}$ to $\mathbb{N}^*$, and let
$S_\infty\subset S(\mathbb{N},\mathbb{N})$
denote the set of bijections from $\mathbb{N}$ to $\mathbb{N}$,
  that is, the set of permutations of $\mathbb{N}$.

The space $S(\mathbb{N},\mathbb{N}^*)$ can be identified with
the countably infinite product $\mathbb{N}^*\times\mathbb{N}^*\cdots$.
Since $\mathbb{N}^*$ is a compact metric space, it follows that $S(\mathbb{N},\mathbb{N}^*)$
is also a compact metric space with the metric $D(\sigma,\tau):=\sum_{i=1}^\infty \frac{d_{\mathbb{N}^*}(\sigma_i,\tau_i)}{2^{i}}$.
For any $n\in\mathbb{N}$, we  identify the set $S_n$ of permutations of $[n]$ with the subset
$\{\sigma\in S_\infty:\sigma_j=j,j>n\}$.
Consequently, if $\mu_n$ is a probability measure on $S_n$, for each $n\in\mathbb{N}$, then
$\{\mu_n\}_{n=1}^\infty$ may be considered as a  sequence of probability measures on
the compact metric space $S(\mathbb{N},\mathbb{N}^*)$.  Thus, any such sequence has a subsequence converging
weakly to a probability measure on $S(\mathbb{N},\mathbb{N}^*)$.

If one uses the above framework to study the limit of the uniform probability measure on $S_n$,
then it is easy to show that the sequence of  measures converges weakly to the degenerate distribution
$\delta_{\infty^{(\infty)}}$
on the point $\infty^{(\infty)}\in S(\mathbb{N},\mathbb{N}^*)$,
where $\infty^{(\infty)}$ denotes the function $\sigma\in S(\mathbb{N},\mathbb{N}^*)$ satisfying
$\sigma_n=\infty$, for all $n\in\mathbb{N}$. On the other hand,
consider
the Mallows distribution  on $S_n$ with parameter $q>0$. This is the probability measure that gives to any
permutation $\sigma\in S_n$ a probability
proportional to $q^{\text{inv}(\sigma)}$, where $\text{inv}(\sigma)$
denotes the number of inversions
in the permutation $\sigma$; that is, $\text{inv}(\sigma)=|\{(i,j):1\le i<j\le n \ \text{and}\ \sigma_i>\sigma_j\}|$.
When $q=1$, the Mallows measure is just the uniform measure.
When $q\in(0,1)$, the Mallows measure favors permutations with few inversions, and when $q>1$, it favors
permutations with many inversions.
When $q>1$, the sequence of Mallows distributions converges weakly to $\delta_{\infty^{(\infty)}}$, but when $q\in(0,1)$, these distributions converge weakly to a nontrivial distribution
on $S(\mathbb{N},\mathbb{N}^*)$ which is in fact supported on the set of permutations $S_\infty$.
The form of this limiting distribution is regenerative. See
\cite{GO10,GO12} for the limiting behavior of the Mallows distribution,
and see \cite{PT} and references therein for more on the general theory of regenerative infinite permutations.

Since the limit of the Mallows distribution with $q\in(0,1)$  is a distribution on $S_\infty$,
the more general framework of $S(\mathbb{N},\mathbb{N}^*)$ is not needed there. However,
this more general framework   is necessary for our
 study of the limiting behavior of the measures $\{\mu_n^\tau\}_{n=1}^\infty$, for $\tau\in S_3$.
It will turn out that the
limiting distribution is trivial in two out of the six cases, while in
  three out of the other four cases,  the limiting distribution
has  a regenerative structure.
In order to describe this regenerative structure,
we will need to consider permutations of subsets $I\subset\mathbb{N}$ not as functions with a domain, but rather just
as images. We will call such  an object a \it permutation image \rm\
of $I$.
Thus,
for example, if $I=\{3,4,9\}$, then there are six permutation images of $I$, which we denote by $(3\ 4 \ 9), (3\ 9\ 4),(4\ 3\ 9), (4\ 9\ 3), (9\ 3 \ 4),(9\ 4 \ 3)$.
We will denote a generic permutation image of $I$ by $\sigma^{\text{im}}_I$.
We also  define $\infty^{(j)}$ to be the $j$-fold image of $\infty$: $\infty^{(j)}=\underbrace{(\infty\infty\cdots\infty)}_{j\ \text{times}}$,  $j\in\mathbb{N}$.
We  will use these permutation images to build functions in $S(\mathbb{N},\mathbb{N}^*)$.
For example, if $I_1=\{3,4,9\}$ and $I_2=\{20,22,24,26,28,30\cdots\}$, and if the permutation images $\sigma^{\text{im}}_{I_i}$, $i=1,2$,
are given by
$\sigma^{\text{im}}_{I_1}=(9\ 3 \ 4)$ and  $\sigma^{\text{im}}_{I_2}=(22\ 20\ 26\ 24\ 30\ 28\ \cdots)$,
then $\sigma:=\sigma^{\text{im}}_{I_1}*\sigma^{\text{im}}_{I_2}$
denotes the function in  $S(\mathbb{N},\mathbb{N})$ given by
$\sigma_1=9,\sigma_2=3,\sigma_3=4,\sigma_4=22,\sigma_5=20,\sigma_6=26,\cdots$,
while $\sigma=\infty^{(2)}*\sigma^{\text{im}}_{I_1}*\infty^{(1)}*\sigma^{\text{im}}_{I_2}$ denotes the function in
$S(\mathbb{N},\mathbb{N}^*)$ given by
$\sigma_1=\infty,\sigma_2=\infty,\sigma_3=9,\sigma_4=3,\sigma_5=4,\sigma_6=\infty,\sigma_7=22,\sigma_8=20,\sigma_9=26,\cdots$.

The mathematical description of our results in the propositions and theorems that follow looks a bit complicated, so we deem it  worthwhile to begin
with  a
verbal synopsis of the results. In what follows, a permutation image of a \it block\rm\ means a permutation
image of a set of consecutive numbers.
\medskip

\noindent \bf 1.\rm\ $\tau=123$: Weak convergence to the trivial distribution $\delta_{\infty^{(\infty)}}$.

\noindent \bf 2.\rm\ $\tau=132$: Weak convergence to the trivial distribution $\delta_{\infty^{(\infty)}}$.

\noindent \bf 3.\rm\ $\tau=312$: Weak convergence to a limiting distribution
which is supported on $S(\mathbb{N},\mathbb{N})-S_\infty$, and  whose structure is a concatenation  that
alternates uniformly random $312$-avoiding permutations images of random finite blocks of infinite expected length
with permutation images of random singletons,
each random singleton being the largest value smaller than the values in the preceding finite block permutation image.
The random finite blocks are obtained in a regenerative
fashion.

\noindent \bf 4.\rm\ $\tau=231$: Weak convergence to a limiting distribution
which is supported on $S_{\text{sur}}(\mathbb{N},\mathbb{N}^*)$, and  whose structure is a concatenation  which
alternates uniformly random $231$-avoiding permutations images of random finite contiguous blocks
with permutation images of the singleton $\infty^{(1)}$. The lengths of the contiguous random finite blocks  are IID,
have infinite expectation and are
obtained
in a regenerative fashion.

\noindent \bf 5.\rm\ $\tau=213$: Weak   convergence to a limiting distribution
which is supported on $S(\mathbb{N},\mathbb{N}^*)-S(\mathbb{N},\mathbb{N})-S_{\text{sur}}(\mathbb{N},\mathbb{N}^*)$,
and  whose structure
is a concatenation  which
alternates permutation images of blocks of  $\infty$ of random finite length
with permutation images of
singletons whose values increase along the concatenation. The
 values of the singletons are obtained in a regenerative fashion, and the lengths of the
blocks of $\infty$ are IID, have infinite expectation  and are obtained in  a regenerative fashion.

\noindent \bf 6.\rm\ $\tau=321$: Here we only have partial results. The limit of any weakly convergent subsequence is a
concatenation  of a  Geom$(\frac12)$ number of uniformly
random block irreducible (for the definition, see the paragraph preceding Lemma \ref{irr321}) 321-avoiding
permutations of
finite contiguous  blocks, the entire set of integers
 starting from 1 and ending at some random
$N$. The  blocks, whose lengths have infinite expectation, are obtained in a regenerative fashion.
  If in fact, the limit is in $S_\infty$, then the continuation $Z$ of the concatenation,
is supported on block irreducible 321-avoiding permutations of the infinite set  $\{N+1,\cdots\}$. Thus, the regenerative structure only
maintains itself for a finite length.
\medskip

\bf\noindent Remark.\rm\ Note that the supports of the limiting distributions in cases (3), (4) and (5) are all disjoint.

We now state our results in full.
\begin{proposition}\label{123and132}
\

\noindent i. Let $\tau=123$. Then
$\lim_{n\to\infty}\mu_n^\tau=\delta_{\infty^{(\infty)}}$.

\noindent ii. Let  $\tau=132$. Then
$\lim_{n\to\infty}\mu_n^\tau=\delta_{\infty^{(\infty)}}$.
\end{proposition}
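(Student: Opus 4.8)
The plan is to prove both parts simultaneously by showing that, under $\mu_n^\tau$, every fixed coordinate escapes to $\infty$. Since $S(\mathbb{N},\mathbb{N}^*)$ carries the product topology and the target $\delta_{\infty^{(\infty)}}$ is a point mass, weak convergence $\mu_n^\tau\Rightarrow\delta_{\infty^{(\infty)}}$ is equivalent to $\mu_n^\tau(\{\sigma:D(\sigma,\infty^{(\infty)})\ge\varepsilon\})\to0$ for each $\varepsilon>0$. Using $d_{\mathbb{N}^*}(\sigma_i,\infty)=2^{\,1-\sigma_i}$, one has $D(\sigma,\infty^{(\infty)})=\sum_{i\ge1}2^{-i}2^{\,1-\sigma_i}\le 2^{\,1-M}+2^{-L}$ whenever $\sigma_i\ge M$ for all $i\le L$; hence every ball around $\infty^{(\infty)}$ contains an event $\{\sigma_i\ge M,\ 1\le i\le L\}$ for suitable $L,M$. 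By a union bound it therefore suffices to show that, writing $N_n(i,m):=|\{\sigma\in S_n(\tau):\sigma_i=m\}|$, one has for every fixed position $i$ and value $m$
\[
\mu_n^\tau(\sigma_i=m)=\frac{N_n(i,m)}{C_n}\longrightarrow 0\qquad(n\to\infty).
\]

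The combinatorial heart is a single "monotone backbone" observation. Suppose $\sigma\in S_n(\tau)$ with $\sigma_i=m$, and let $a,b>m$ occur to the right of position $i$, at positions $j<k$. Then the triple at positions $i<j<k$ has values $(m,a,b)$, which is order-isomorphic to $132$ when $a>b$ and to $123$ when $a<b$. Thus for $\tau=132$ all values exceeding $m$ that lie to the right of position $i$ must appear in increasing order, and for $\tau=123$ in decreasing order; in either case their relative order is completely determined. This is what collapses the otherwise exponential number of choices: a permutation counted by $N_n(i,m)$ is entirely specified by (a) the choice and arrangement of the $i-1$ entries in positions $1,\dots,i-1$, of which there are at most $\binom{n-1}{i-1}(i-1)!$, together with (b) the placement, among the $n-i$ slots to the right of position $i$, of the at most $m-1$ entries smaller than $m$, the remaining right-hand slots being filled by the large entries in their forced monotone order; the number of choices in (b) is at most $\sum_{s=0}^{m-1}\binom{n-i}{s}\,s!$.

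Multiplying these two polynomial factors yields $N_n(i,m)=O\!\left(n^{\,i+m-2}\right)$ for fixed $i$ and $m$. Since $C_n\sim 4^n/(\sqrt\pi\,n^{3/2})$ grows exponentially, $\mu_n^\tau(\sigma_i=m)\to0$, and summing over $m<M$ and $i\le L$ closes the reduction of the first paragraph. The argument is identical for the two patterns, the only difference being the direction of the monotone backbone, so it proves parts (i) and (ii) at once.

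The step I expect to be most delicate is precisely the one that makes the count polynomial rather than exponential: a careless bound that leaves the $n-i$ right-hand positions unconstrained contributes a factor $C_{n-i}\asymp 4^{n-i}$, and the ratio to $C_n$ then fails to vanish, so it is essential to invoke the forced monotonicity of the large right-hand entries to kill that factor. The remaining bookkeeping—that only the $i-1$ leading positions and the at most $m-1$ small entries carry any freedom, everything else being determined—is routine. (For $\tau=132$ the same idea applied at $i=1$ gives the exact formula $N_n(1,m)=[x^{m-1}]\,C(x)^{\,n-m+1}$ with $C(x)=\sum_k C_kx^k$, but the polynomial upper bound above is all that is needed and it dispenses with any case analysis in $i$.)
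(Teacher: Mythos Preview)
Your proof is correct and follows essentially the same approach as the paper's own argument: both use the key observation that if $\sigma_i=m$ then the entries exceeding $m$ to the right of position $i$ are forced into a monotone order (decreasing for $\tau=123$, increasing for $\tau=132$), yielding the polynomial bound $N_n(i,m)=O(n^{\,i+m-2})$, which is negligible against $C_n\asymp 4^n n^{-3/2}$. Your reduction from weak convergence to the coordinatewise statement via the explicit metric is slightly more detailed than the paper's, but the combinatorial core is identical.
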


To present the rest of the  results, we need to introduce some more definitions.
The distribution of the random variable $X$ defined below will play an important role in our results.
\begin{equation}\label{X}
P(X=n)=\frac{C_n}{2\cdot4^n},\ n=0,1,\cdots,
\end{equation}
where $C_n$ is the   $n$th Catalan number.

\noindent \bf Remark.\rm\ As is well-known \cite{P}, $\frac{1-\sqrt{1-4x}}{2x}=\sum_{n=0}^\infty C_nx^n$, for $|x|<\frac14$. Since $C_n\sim (\pi)^{-\frac12}4^n n^{-\frac32}$, if follows that the series converges for $x=\frac14$, and
$\sum_{n=0}^\infty C_n(\frac14)^n=2$. Thus, \eqref{X} does indeed define a distribution.
It also follows that $EX^p<\infty$ for $p\in(0,\frac12)$ but not for $p=\frac12$.

Let $Y$ denote a random variable with distribution Geom$(\frac12)$:
\begin{equation}\label{Y}
P(Y=n)=(\frac12)^n,\ n\in\mathbb{N}.
\end{equation}

Define
\begin{equation}\label{Tsums}
\begin{aligned}
&T_0^X=T_0^Y=0\\
&T_n^X=\sum_{j=1}^n X_j\ \ \ \            T_n^Y=\sum_{j=1}^n Y_j,\ n\in\mathbb{N},
\text{where}\ \{X_n\}_{n=1}^\infty\ \text{and}\ \{Y_n\}_{n=1}^\infty\ \text{are}\\
&\text{ mutually independent IID sequences with}\
X_1\ \text{distributed according to}\
 \eqref{X}\\
 & \text{and}\ Y_1\ \text{distributed according to}\ \eqref{Y}.
\end{aligned}
\end{equation}

We define pattern avoidance for permutation images in the obvious way; for example the permutation image $(5\ 3\ 9\ 1)$ is 123-avoiding, but is not 321-avoiding (because of the terms 5\ 3\ 1).
For fixed $\tau\in S_3$ and for all finite blocks $I\subset\mathbb{N}$,  define the random permutation images $\Pi^\tau_I$ of $I$ as follows:
\begin{equation}\label{Pi}
\begin{aligned}
&\Pi_I^\tau\ \text{is uniformly distributed over}\ \tau\text{-avoiding  permutation images of the}\\
&\text{ finite block}\
I\subset\mathbb{N}\
 \text{and}\
 \{P_I^\tau:I\subset \mathbb{N}, |I|<\infty\}\ \text{are independent}.
\end{aligned}
\end{equation}
\noindent \it Note on Notation:\rm\ In the sequel we will frequently use the following notation for
blocks: $[a,b]:=\{a,\cdots, b\}\subset\mathbb{N}$,
for $a,b\in\mathbb{N}$ with $a\le b$.

\begin{theorem}\label{312}
Let $\tau=312$.
Let $\{X_n\}_{n=1}^\infty,\{Y_n\}_{n=1}^\infty$ and $\{\Pi_I^{312}:I\subset\mathbb{N}\}$  be mutually independent random variables
with $\{\Pi_I^{312}:I\subset\mathbb{N}\}$ as in \eqref{Pi} and with
 $\{X_n\}_{n=1}^\infty$,  $\{Y_n\}_{n=1}^\infty$, $T_n^X, T_n^Y$ as in \eqref{Tsums}.
Then
$\lim_{n\to\infty}\mu_n^\tau$ is the distribution of the $S(\mathbb{N},\mathbb{N})-S_\infty$-valued random variable
$$
\begin{aligned}
&*_{n=1}^\infty\Pi^{312}_{[T_n^Y+T_{n-1}^X+1,T_n^Y+T_n^X]}*(T_n^Y+T_{n-1}^X):=\\
&\Pi^{312}_{[T^Y_1+1,T_1^Y+T_1^X]}*(T_1^Y)*\Pi^{312}_{[T_2^Y+T_1^X+1,T_2^Y+T_2^X]}*(T_2^Y+T_1^X)*\cdots.
\end{aligned}
$$
\end{theorem}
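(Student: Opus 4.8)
The plan is to prove the theorem by establishing convergence of the finite-dimensional distributions of $\mu_n^{312}$ to those of the random object in the statement. Since $S(\mathbb{N},\mathbb{N}^*)$ is a countable product of copies of the compact space $\mathbb{N}^*$, the family $\{\mu_n^{312}\}$ is automatically tight and weak convergence is equivalent to convergence of the joint laws of $(\sigma_1,\dots,\sigma_k)$ for every $k$. Thus it suffices to compute $\lim_{n\to\infty}\mu_n^{312}(\sigma_1=a_1,\dots,\sigma_k=a_k)$ for arbitrary $a_1,\dots,a_k\in\mathbb{N}^*$ and to check that the resulting numbers agree with the probabilities assigned by the law of $*_{n=1}^\infty\Pi^{312}_{[T_n^Y+T_{n-1}^X+1,\,T_n^Y+T_n^X]}*(T_n^Y+T_{n-1}^X)$. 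The whole argument rests on a structural decomposition of $312$-avoiding permutations, which I describe first.

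Writing $\alpha\oplus\beta$ for the direct sum (place a copy of $\beta$, shifted up in both position and value, after $\alpha$), every permutation decomposes uniquely as $\gamma_1\oplus\cdots\oplus\gamma_r$ with each $\gamma_j$ sum-indecomposable, and the sum is $312$-avoiding iff each $\gamma_j$ is. The key elementary lemma is that a $312$-avoiding permutation of $[m]$ is sum-indecomposable iff its last entry equals $1$: if the value $1$ sits at position $p$, then $312$-avoidance forces every entry before position $p$ to be smaller than every entry after it, so positions $1,\dots,p$ carry the values $\{1,\dots,p\}$ and split off a direct summand unless $p=m$. Deleting the terminal $1$ and renormalizing gives a bijection between sum-indecomposable $312$-avoiding permutations of $[m]$ and \emph{all} $312$-avoiding permutations of $[m-1]$, so there are exactly $C_{m-1}$ of the former. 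Equivalently, with $A(x):=\sum_{m\ge1}C_{m-1}x^m=\tfrac{1-\sqrt{1-4x}}{2}$ one has $\sum_{n\ge0}C_nx^n=(1-A(x))^{-1}$; hence under $\mu_n^{312}$ the sequence of block sizes $(m_1,\dots,m_r)$ is a renewal sequence with interarrival weights $C_{m-1}$ and partition function $C_n$, the probability of a given size-sequence being $\prod_j C_{m_j-1}/C_n$.

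Next I would analyze this renewal sequence asymptotically. Using $C_n\sim\pi^{-1/2}4^nn^{-3/2}$ one gets $C_{n-k}/C_n\to4^{-k}$, so a single leading block has limiting size-law $P(m_1=m)\to C_{m-1}4^{-m}$, which is defective with total mass $A(\tfrac14)=\tfrac12$; the missing mass reflects that the block-size law has infinite mean, its tail being $\sim m^{-3/2}$, consistent with the Remark that $EX^p<\infty$ exactly for $p<\tfrac12$. Thus in the limit each successive block is independently ``finite'' with probability $\tfrac12$ and ``escapes to infinity'' with probability $\tfrac12$; conditioned on being finite, its size is distributed as $X+1$ with $X$ as in \eqref{X}, and, given its size, its internal pattern is a uniform $312$-avoiding permutation image, producing the factors $\Pi^{312}$ of \eqref{Pi}. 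A finite block contributes its terminal $1$ (the block minimum) placed after a uniform $312$-avoiding image of the remaining entries, which is exactly the shape $\Pi^{312}_{[\cdot]}*(\cdot)$ in the statement. When a block escapes, its minimum is forced out to position $\infty$, deleting one value from the image and creating a gap; recursing on the (again large and uniform) top part of an escaping block regenerates the picture. Tracking positions and values through this recursion, the number of escapes preceding the $n$-th visible block is $\mathrm{Geom}(\tfrac12)$, yielding the $Y_n$ of \eqref{Y}, and a direct bookkeeping identifies the value range of the $n$-th visible block as $[T_n^Y+T_{n-1}^X+1,\,T_n^Y+T_n^X]$ and its following singleton as $T_n^Y+T_{n-1}^X$, with $T^X,T^Y$ as in \eqref{Tsums}.

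Finally I would make this rigorous by turning the heuristic into an exact finite-dimensional computation. Applying the decomposition at the value $1$ (and iterating it) yields recursions for $\#\{\sigma\in S_n(312):\sigma_1=a_1,\dots,\sigma_k=a_k\}$ that collapse, via the Catalan convolution $\sum_{j=0}^mC_jC_{m-j}=C_{m+1}$, into closed combinations of ratios $C_{n-j}/C_n$; letting $n\to\infty$ with $C_{n-j}/C_n\to4^{-j}$ gives explicit limits, which one matches against the claimed law. As a sanity check, both sides give $P(\sigma_1=1)=P(\sigma_1=2)=\tfrac14$ and $P(\sigma_1=3)=\tfrac3{16}$. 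It remains to note that the limiting object lies almost surely in $S(\mathbb{N},\mathbb{N})-S_\infty$: every position receives a finite value because the visible blocks have finite length and their lengths sum to infinity, while the map fails to be onto because the escaped minima skip infinitely many values. I expect the main obstacle to be the combinatorial bookkeeping of the recursive escape structure: proving that the gap counts emerge as \emph{independent} $\mathrm{Geom}(\tfrac12)$ variables $Y_n$, jointly independent of the block sizes $X_n$ and of the internal patterns $\Pi^{312}$, and that the value ranges align precisely as stated. This requires controlling the uniformity of the approximation $C_{n-k}/C_n\to4^{-k}$ across the growing number of blocks influencing the first $k$ coordinates, and organizing the hierarchy of escapes so that the regeneration is genuine rather than merely asymptotic.
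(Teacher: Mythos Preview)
Your proposal is correct and follows essentially the same route as the paper. The paper's key ingredient (Lemma~\ref{13middle}) is the formula $\mu_n^{312}(\sigma_1^{-1}=j)=C_{j-1}C_{n-j}/C_n$, which is exactly your observation that the first sum-indecomposable block has size $j=\sigma_1^{-1}$ and that there are $C_{j-1}$ such blocks; the paper then iterates by conditioning on $\sigma_1^{-1}=j_1$ and, when $j_1$ is large, examining $\sigma_2^{-1}$ inside the resulting uniform $312$-avoider on $\{2,\dots,j_1\}$---precisely your ``recursing on the top part of an escaping block.'' The paper's Lemma~\ref{Catalanlemma} is your defective-limit computation $C_{j-1}C_{n-j}/C_n\to C_{j-1}4^{-j}$ with missing mass $\tfrac12$. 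Your framing via the direct-sum decomposition makes the regenerative structure slightly more transparent, but the underlying recursion is identical, and the paper flags the very obstacle you name at the end: it calls a fully rigorous treatment of the nested-escape bookkeeping ``extremely tedious'' and carries out only the first two regenerative steps explicitly before appealing to iteration.
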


\begin{theorem}\label{231}
Let $\tau=231$.
Let $\{X_n\}_{n=1}^\infty$ and $\{\Pi_I^{231}:I\subset\mathbb{N}\}$  be mutually independent random variables
with $\{\Pi_I^{231}:I\subset\mathbb{N}\}$ as in \eqref{Pi} and with
$\{X_n\}_{n=1}^\infty$ and  $T_n^X$ as in \eqref{Tsums}.
Then
$\lim_{n\to\infty}\mu_n^\tau$ is the distribution of the $S_{\text{sur}}(\mathbb{N},\mathbb{N}^*)$-valued random variable
\begin{equation*}
\begin{aligned}
*_{n=1}^\infty\Pi^{231}_{[T^X_{n-1}+1,T^X_n]}*\infty^{(1)}:=\Pi^{231}_{[1,T^X_1]}*\infty^{(1)}*\Pi^{231}_{[T_1^X+1,T_2^X]}*\infty^{(1)}*\cdots.
\end{aligned}
\end{equation*}

\end{theorem}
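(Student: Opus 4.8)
The plan is to prove weak convergence via convergence of finite-dimensional distributions. Since $S(\mathbb{N},\mathbb{N}^*)$ is the countable product $\prod_{i\ge1}\mathbb{N}^*$ of the compact space $\mathbb{N}^*$, and each $\mu_n^{231}$ is a Borel probability measure on this compact space, functions depending on finitely many coordinates are dense in $C(S(\mathbb{N},\mathbb{N}^*))$, so $\mu_n^{231}$ converges weakly iff for every $L$ the law of the marginal vector $(\sigma_1,\dots,\sigma_L)\in(\mathbb{N}^*)^L$ converges weakly. Because $\mathbb{N}^*$ is compact with $\infty$ an honest point (large integers converge to $\infty$ in $d_{\mathbb{N}^*}$), it suffices to show that $\mu_n^{231}(\sigma_1=a_1,\dots,\sigma_L=a_L)$ converges for every finite target pattern $(a_1,\dots,a_L)$ and to identify the escaping mass with the $\infty$-coordinates of the candidate limit law $\mu^{231}$. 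Thus I would reduce the theorem to computing these cylinder probabilities and matching them against the corresponding probabilities under $\mu^{231}$.

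The structural engine is the decomposition of a $231$-avoiding permutation by the position of its maximum. One shows that if $\sigma\in S_n(231)$ has $\sigma_p=n$, then $\{\sigma_1,\dots,\sigma_{p-1}\}=\{1,\dots,p-1\}$ and $\{\sigma_{p+1},\dots,\sigma_n\}=\{p,\dots,n-1\}$, and that each of these two words is independently $231$-avoiding; hence $\sigma$ is encoded bijectively by $(p;\sigma|_{\mathrm{left}},\sigma|_{\mathrm{right}})$, giving $S_n(231)\leftrightarrow\bigsqcup_{p=1}^nS_{p-1}(231)\times S_{n-p}(231)$. Consequently, under $\mu_n^{231}$ the position of the maximum has $P(p)=C_{p-1}C_{n-p}/C_n$, with the left and right factors conditionally uniform and independent. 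I would iterate this from the left, scanning positions $1,2,\dots$: the first coordinate whose value escapes to $\infty$ is the first ``top value'' that sits at a bounded position. Using $C_{n-s}/C_n\to 4^{-s}$, at each level the maximum sits at bounded position $c+1$ (``emit a block of length $c$ followed by $\infty$'') with limiting mass $C_c/4^{\,c+1}$, while otherwise ($p\to\infty$) the whole visible window lies inside the left block and the analysis recurses into a fresh uniform $231$-avoiding permutation of a large contiguous block.

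The crux is to resum this cascade. The emission and recursion masses balance, $\sum_{c\ge0}C_c/4^{\,c+1}+\tfrac12=1$, using $\sum_{c\ge0}C_c 4^{-c}=2$; and the number of recursion steps preceding the first emission is geometric with parameter $\tfrac12$. Summing the geometric series converts the defective per-level law into the proper law of $X$, namely $\sum_{r\ge1}(\tfrac12)^{r-1}\,C_c/4^{\,c+1}=C_c/(2\cdot4^{c})=P(X=c)$. Hence the first block from the left has length distributed as $X$ and is immediately followed by an escaping value ($\infty$); and since the right block produced at the emitting level is again a uniform $231$-avoiding permutation of a contiguous set of values, the process regenerates, yielding the concatenation $*_{n\ge1}\Pi^{231}_{[T^X_{n-1}+1,T^X_n]}*\infty^{(1)}$ with i.i.d.\ block lengths $X_n$. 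Finally I would check that a.s.\ infinitely many blocks are emitted and the finite values exhaust $\mathbb{N}$, so that the limit is supported on $S_{\mathrm{sur}}(\mathbb{N},\mathbb{N}^*)$.

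I expect the main obstacle to be exactly this resummation. One must justify interchanging the limit $n\to\infty$ with the sum over recursion levels, control the tails of the infinite-mean block-length law $X$ uniformly in $n$ so that no mass leaks to configurations with only finitely many $\infty$'s, and verify that the regeneration is exact rather than merely asymptotic. The delicate conceptual point is that the naive per-level (``global maximum'') decomposition produces the \emph{defective} law $C_c/4^{\,c+1}$, and only after correctly accounting for the geometric cascade of escapes-into-the-left-block does the proper law $X$ emerge; carrying out this bookkeeping rigorously for a general cylinder event $(\sigma_1,\dots,\sigma_L)$, rather than merely for $\sigma_1$, is where the real work lies.
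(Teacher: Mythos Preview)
Your proposal is correct and follows essentially the same approach as the paper. Both arguments decompose a $231$-avoiding permutation at the position of its maximum (Lemma~\ref{13middle}), use the dichotomy from Lemma~\ref{Catalanlemma} that this position is either bounded (probability $\tfrac12$, distributed as $X_1+1$) or escapes to $\infty$ (probability $\tfrac12$, in which case one recurses into the left block), sum the resulting geometric cascade of recursions to turn the defective per-level law $C_c/4^{c+1}$ into the proper law of $X$, and then regenerate on the right block of the emitting level. The paper carries out the rigorous bookkeeping for general cylinder events only in the $312$ case (Theorem~\ref{312}) and refers back to it here, which is precisely the ``real work'' you flag in your final paragraph.
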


For the next result,
we will need some additional notation.
Define
\begin{equation}\label{T-hatsums}
\begin{aligned}
&T_0^{\hat X}=T_0^{Y^{(0)}}=T_0^{Y^{(1)}}=0\\
&T_n^{\hat X}=\sum_{j=1}^n \hat X_j\ \ \ \ T_n^{ Y^{(i)}}=\sum_{j=1}^n  Y^{(i)}_j,\ n\in\mathbb{N},\ i=0,1,\\
&\text{where}\ \{\hat X_n\}_{n=1}^\infty, \{ Y^{(0)}_n\}_{n=1}^\infty \ \text{and}\  \{ Y^{(1)}_n\}_{n=1}^\infty\
\text{are}\
\text{mutually independent IID}\\
&\text{sequences with}\  \hat X_1\stackrel{\text{dist}}{=}X+1,\ \text{where} \ X\ \text{is as in}\
 \eqref{X},\\
 & \text{and}\  Y_1^{(i)}\stackrel{\text{dist}}{=}Y, i=0,1,\ \text{where}\  Y\ \text{is as in}\ \eqref{Y}.
\end{aligned}
\end{equation}
Let
\begin{equation}\label{Ber12}
\chi_{0,1}\stackrel{\text{dist}}{=}\text{Ber}(\frac12):\ P(\chi_{0,1}=0)=P(\chi_{0,1}=1)=\frac12.
\end{equation}
For $J=\{J_n\}_{n=1}^\infty$, where $J_n\in \mathbb{N}$, and
$I=(i_1,i_2,\cdots )\subset\mathbb{N}$  an increasing sequence,
define
\begin{equation}\label{IJ*}
\infty^{(J)}*I:=*_{n=1}^\infty\infty^{(J_n)}*(i_n)=\infty^{(J_1)}*(i_1)*\infty^{(J_2)}*(i_2)*\cdots.
\end{equation}
\begin{theorem}\label{213}
Let $\tau=213$.
Let $\{X_n\}_{n=1}^\infty$,  $\{\hat X_n\}_{n=1}^\infty$, $\{ Y_n^{(i)}\}_{n=1}^\infty$, $i=0,1$,
and $\chi_{0,1}$ be mutually independent random variables with $\{X_n\}_{n=1}^\infty$ as in \eqref{Tsums},
with    $T_n^{\hat X}$,
$T_n^{ Y^{(i)}}, i=0,1$,
  as in \eqref{T-hatsums}, and with $\chi_{0,1}$ as in \eqref{Ber12}.
Then $\lim_{n\to\infty}\mu_n^\tau$ is the distribution  of the
$S(\mathbb{N},\mathbb{N}^*)-S(\mathbb{N},\mathbb{N})-S_{\text{sur}}(\mathbb{N},\mathbb{N}^*)$ -valued random variable
$$
\chi_{0,1}\cdot\big(\infty^{(J)}*I^{(1)}\big)+\big(1-\chi_{0,1})\cdot\big(\infty^{(J)}*I^{(0)}\big),
$$
where $\infty^{(J)}*I^{(i)}$ is as in \eqref{IJ*}, with
$$
J=  \{X_n\}_{n=1}^\infty
$$
and
$$
\begin{aligned}
&I^{(1)}=\cup_{n=0}^\infty[T_n^{ Y^{(1)}}+T^{\hat X}_{T_n^{ Y^{(2)}}}+1,T_{n+1}^{ Y^{(1)}}+T^{\hat X}_{T_n^{ Y^{(2)}}}],\\
&I^{(0)}=\cup_{n=0}^\infty[T_n^{ Y^{(1)}}+T^{\hat X}_{T_{n+1}^{ Y^{(2)}}}+1,T_{n+1}^{ Y^{(1)}}+T^{\hat X}_{T_{n+1}^{ Y^{(2)}}}].
\end{aligned}
$$
\end{theorem}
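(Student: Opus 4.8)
The plan is to establish weak convergence through the finite--dimensional distributions. Since $S(\mathbb{N},\mathbb{N}^*)$ is the compact product $(\mathbb{N}^*)^{\mathbb{N}}$, weak convergence of $\mu_n^{213}$ follows once we show that for each $k$ and each prescription of the first $k$ coordinates by finite values together with lower thresholds for the coordinates meant to be large, the corresponding probabilities converge to those of the candidate law, and that no mass escapes as the thresholds tend to $\infty$ (cf. the analogous reductions behind Proposition \ref{123and132} and Theorems \ref{312}, \ref{231}). The combinatorial engine is the recursive decomposition of $213$-avoiders: if $\sigma\in S_n(213)$ and $\sigma_1=m$, then avoidance forces every entry exceeding $m$ to precede every entry smaller than $m$, so $\sigma=m\,\pi^{L}\pi^{S}$ with $\pi^{L}$ a $213$-avoider of $\{m+1,\dots,n\}$ and $\pi^{S}$ a $213$-avoider of $\{1,\dots,m-1\}$; moreover, under $\mu_n^{213}$ and conditionally on $\sigma_1=m$, the two factors are independent uniform $213$-avoiders. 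In particular, the leading entry of a uniform $213$-avoider of a block of size $N$ equals its $j$-th smallest element with probability $C_{j-1}C_{N-j}/C_N$.

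I would then exploit the Catalan asymptotics recorded after \eqref{X}, namely $C_{N-j}/C_N\to 4^{-j}$, to analyze the leading entry of a large block. Two regimes survive in the limit: with the bottom--offset $j$ fixed the probability tends to $C_{j-1}4^{-j}$, summing to $\tfrac12$, while with the top--offset $\ell=N-j$ fixed it tends to $C_\ell/4^{\ell+1}=\tfrac12 P(X=\ell)$, again summing to $\tfrac12$, with $X$ as in \eqref{X}. Reading the permutation in preorder along the decomposition tree, the initial coordinates traverse the left spine, producing an increasing sequence of leading entries; each step is asymptotically ``finite'' (offset from the bottom, contributing a finite singleton value) or ``infinite'' (offset $\ell\sim X$ from the top, contributing a node together with the $\ell$ top values as a finite block of $\infty$'s) with probability $\tfrac12$ each. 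After an infinite step the traversal returns to the right subtree of the offending node, whose values begin just above the last finite value; this is the source of the regeneration and of the alternation between finite blocks of $\infty$ and increasing singletons, and it is also why values lying strictly between consecutive singletons are pushed to infinite position, making the limit almost surely non-surjective while containing infinitely many $\infty$'s. This already pins the support down to $S(\mathbb{N},\mathbb{N}^*)-S(\mathbb{N},\mathbb{N})-S_{\text{sur}}(\mathbb{N},\mathbb{N}^*)$.

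Finally I would assemble these increments into the regenerative description of the statement. Runs of consecutive finite steps and of consecutive infinite steps each have Geom$(\tfrac12)$ length, which is the origin of the two independent geometric sequences in \eqref{T-hatsums}; the $\hat X=X+1$ increments (an infinite step contributes its own entry together with $\ell\sim X$ top values) are what appear, summed over geometrically many terms, in the $\hat X$-sums entering $I^{(0)}$ and $I^{(1)}$, while the singleton values advance through consecutive integers within a run and jump by such sums between runs, and the Bernoulli variable $\chi_{0,1}$ of \eqref{Ber12} selects the initial phase of the alternation. The decisive and most delicate step is to turn this heuristic into the exact statement: one must compute the limiting joint law of the lengths of the $\infty$-blocks and of the singleton values and recognize it as the specific renewal law built in \eqref{T-hatsums}--\eqref{IJ*}, reconciling the per-step bottom/top description above with the stated form in which the block lengths $J=\{X_n\}$ are IID copies of $X$; and one must justify exchanging the limit $n\to\infty$ with the infinitely many regenerations, controlling the heavy, infinite-mean tails of the block lengths uniformly in $n$ so that no mass leaks into configurations with only finitely many singletons. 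The qualitative picture falls out of the decomposition and the asymptotics, but this exact identification --- including the precise roles of $\chi_{0,1}$ and of the two nearly identical value sequences $I^{(0)},I^{(1)}$ --- is where the substance of the proof lies.
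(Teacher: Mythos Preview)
Your approach is correct but proceeds by a genuinely different decomposition than the paper's. The paper tracks the \emph{position} of the smallest value, $\sigma^{-1}_1$: when this position is small ($\sim X+1$) one reads off a block $\infty^{(X)}$ followed by the singleton equal to the current minimum, and when it is large ($n-\sigma^{-1}_1\sim \hat X-1$) the $\hat X$ smallest values are swept to infinite position with nothing yet written. Iterating this directly produces the stated form, with runs of the two cases giving the geometric variables $Y^{(1)},Y^{(2)}$ and the Bernoulli $\chi_{0,1}$ selecting which case occurs first. You instead track the \emph{value} at the first position, $\sigma_1$, using the decomposition $\sigma=m\,\pi^L\pi^S$; your ``finite'' step writes a single singleton (and sweeps the $j-1\sim X$ values below it), while your ``infinite'' step writes $\ell+1\sim\hat X$ consecutive $\infty$'s. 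The two recursions are different: in the paper each ``visible'' step produces a block of $\infty$'s \emph{and} a singleton, whereas in yours each step produces one or the other.

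Your route in fact yields a cleaner description of the limit than the theorem's: the singleton values are $T^{\hat X}_1,T^{\hat X}_2,\dots$ with i.i.d.\ $\hat X$ increments (since only finite steps move the bottom of the active block), and the $\infty$-block lengths between consecutive singletons are i.i.d.\ with the law of a Geom$_0(\tfrac12)$-many sum of independent $\hat X$'s, which one checks via the Catalan generating function equals the law of $X$; the two sequences are independent. What remains is to verify that the theorem's more elaborate description $(\chi_{0,1},I^{(0)},I^{(1)})$ has exactly this distribution---it does, by the same generating-function identity---and you rightly flag this reconciliation as the substantive step. One small overstatement: your worry about ``exchanging the limit with infinitely many regenerations'' and uniform tail control is not really needed, since on the compact product space $S(\mathbb{N},\mathbb{N}^*)$ convergence of all finite-dimensional cylinder probabilities already forces weak convergence; the infinite-mean tails affect the description of the limit but not the convergence argument itself.
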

For the  final pattern, $\tau=321$, we need some more notation and another concept.
Let $I\subset\mathbb{N}$ be a (possibly infinite) block of  integers, and
let $S_{(I)}$ denote the set of permutations of the block $I$. (In this notation, $S_n=S_{([n])}$.)
Let $\sigma\in S_{(I)}$ and write $I$ generically as $I=\{j+i:0\le i<n^*\}$, where $n^*\in\mathbb{N}^*$.
If there does not exist a $k$ satisfying $0\le k<n^*$  and such that $\sigma$ maps $\{j,\cdots, j+k\}$ to itself, then we call
$\sigma$  a\it\ block irreducible\rm\ permutation in $S_{(I)}$.
Denote the set of 321-avoiding permutations in $S_{(I)}$ by $S_{(I)}(321)$,
and denote by  $S^{\text{b-irr}}_{(I)}(321)$ the set of  block irreducible permutations in $S_{(I)}(321)$.
We will prove the following lemma.
\begin{lemma}\label{irr321}
Let $I=\{m+1,\cdots, m+j\}$, for some $m,j\in\mathbb{N}$. Then
\begin{equation}
|S^{\text{b-irr}}_{(I)}(321)|=C_{j-1},\ j\ge1.
\end{equation}
\end{lemma}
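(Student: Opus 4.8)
The plan is to recognize block irreducibility as the classical notion of sum-indecomposability and then exploit the multiplicative structure of the direct-sum decomposition of permutations. First I would observe that both $321$-avoidance and the block irreducibility condition are invariant under the order-preserving relabeling $m+i\mapsto i$, so it suffices to treat $I=[j]$; write $b_j:=|S^{\text{b-irr}}_{([j])}(321)|$ and recall from the identity $|S_n(\tau)|=C_n$ that $|S_{([j])}(321)|=C_j$. For $\alpha\in S_p$ and $\beta\in S_q$, let $\alpha\oplus\beta\in S_{p+q}$ denote the direct sum, which places $\alpha$ on positions $1,\dots,p$ and values $1,\dots,p$, and a shifted copy of $\beta$ on the remaining positions and values. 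Matching the paper's indexing (the prefixes $\{j,\dots,j+k\}$ for $0\le k\le n^*-2$) to the condition $\sigma(\{1,\dots,k\})=\{1,\dots,k\}$ for some $1\le k\le j-1$, one sees that $\sigma\in S_j$ fails to be block irreducible precisely when $\sigma=\alpha\oplus\beta$ nontrivially. Thus block irreducible equals sum-indecomposable, and every $\sigma\in S_j$ admits a unique finest decomposition $\sigma=\alpha_1\oplus\cdots\oplus\alpha_r$ into indecomposables.

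The crux of the argument is that $321$-avoidance is compatible with this decomposition in \emph{both} directions: $\sigma=\alpha_1\oplus\cdots\oplus\alpha_r$ avoids $321$ if and only if each $\alpha_i$ does. I would prove this for $r=2$ and then extend by associativity. In $\alpha\oplus\beta$ every value occurring in the first $p$ positions is smaller than every value in the last $q$ positions; since a decreasing subsequence has increasing positions but decreasing values, it cannot use positions drawn from both blocks. Hence any occurrence of $321$ in $\sigma$ is confined to a single summand, giving the ``only if'' direction; the ``if'' direction is immediate because each $\alpha_i$ occurs as a pattern of $\sigma$ and avoidance is inherited by patterns. This two-directional compatibility is the only genuine content of the proof, and I expect it to be the main (though modest) obstacle—everything afterward is formal bookkeeping.

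With this equivalence in hand, the $321$-avoiders are exactly the (possibly empty) sequences of block irreducible $321$-avoiders under $\oplus$, an operation that adds sizes. Setting $C(x)=\sum_{j\ge0}C_j x^j$ and $B(x)=\sum_{j\ge1}b_j x^j$ (so $B(0)=0$), the unique decomposition yields $C(x)=\sum_{r\ge0}B(x)^r=(1-B(x))^{-1}$, hence $B(x)=1-\tfrac{1}{C(x)}$. I would then invoke the Catalan functional equation $C(x)=1+xC(x)^2$, equivalent to the relation $xC(x)^2=C(x)-1$ implicit in the Remark following \eqref{X}. This gives $\tfrac{1}{C(x)}=1-xC(x)$ and therefore $B(x)=xC(x)=\sum_{j\ge1}C_{j-1}x^j$. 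Comparing coefficients of $x^j$ produces $b_j=C_{j-1}$, which is the assertion of the lemma.
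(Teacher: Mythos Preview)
Your argument is correct and rests on the same structural idea as the paper's: both reduce to $I=[j]$, both show that a $321$-avoiding permutation factors as the concatenation of a block irreducible $321$-avoider on an initial block with a $321$-avoider on the rest, and both extract $b_j=C_{j-1}$ from the resulting convolution. For the compatibility step the paper invokes the characterization of $321$-avoiders as unions of two increasing subsequences, whereas you argue more directly that a $321$-pattern cannot straddle the two halves of a direct sum; your argument there is arguably cleaner. For the finishing step the paper stays elementary: it peels off only the \emph{first} irreducible block to obtain $C_n=\sum_{j=1}^n b_j C_{n-j}$, places this next to the Catalan recurrence $C_n=\sum_{j=1}^n C_{j-1}C_{n-j}$, and reads off $b_j=C_{j-1}$ by induction on $n$. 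You instead use the \emph{full} indecomposable decomposition and generating functions, getting $C(x)=(1-B(x))^{-1}$ and then $B(x)=xC(x)$ from $C(x)=1+xC(x)^2$. The two routes are equivalent (your identity $B(x)C(x)=C(x)-1$ is precisely the paper's convolution), so the difference is one of packaging: the paper's route avoids generating functions and, as a byproduct, records the formula $|S_n^{\text{b-irr};j}(321)|=C_{j-1}C_{n-j}$ (their \eqref{irrnj}), which is used later in the proof of Proposition~\ref{321}; your route is slicker but does not isolate that intermediate fact.
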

\bf\noindent Remark.\rm\ Of course, $|S_{(I)}(321)|=C_j$, for $I$ as in the lemma.
\medskip

Let $\mathcal{I}$ denote the class of all
 finite blocks $I\subset\mathbb{N}$.
Define the random permutations  $\{\Pi^{321;\text{b-irr}}_{(I)}\}_{I\in\mathcal{I}}$ as follows:
\begin{equation}\label{Piirr}
\begin{aligned}
&\Pi^{321;\text{b-irr}}_{(I)} \ \text{is uniformly distributed over the set}\  S^{\text{b-irr}}_{(I)}(321)\ \text{of 321-avoiding}\\
& \text{ block irreducible permutations  of}\ I\in\mathcal{I}\ \text{and}\
 \{\Pi_{(I)}^{321;\text{b-irr}}\}_{I\in\mathcal{I}}\ \text{are independent}.
\end{aligned}
\end{equation}

\begin{proposition}\label{321}
Let $\tau=321$.
Let $\{\hat X_n\}_{n=1}^\infty, Y$ and  $\{\Pi_{(I)}^{321;\text{b-irr}}:I\subset\mathcal{I}\}$  be mutually independent random variables
with $\{\Pi_{(I)}^{321;\text{b-irr}}:I\subset\mathcal{I}\}$ as in \eqref{Piirr},  with
 $\{\hat X_n\}_{n=1}^\infty$ and $T_n^{\hat X}$  as in \eqref{T-hatsums} and with $Y$ as in \eqref{Y}.
Then the distribution  of any   weakly converging subsequence of $\{\mu^{321}_n\}_{n=1}^\infty$
is the distribution of an $S(\mathbb{N},\mathbb{N}^*)$-valued random variable of the form
$$
\big(*_{n=0}^{Y-2}\Pi^{321;\text{b-irr}}_{([T^{\hat X}_n+1,T^{\hat X}_{n+1}])}\big)*Z,
$$
for some appropriate $Z$. If the limiting distribution is in fact
supported on $S_\infty$, then the random variable $Z$, conditioned on $Y=y$ and $T^{\hat X}_{y-1}=M$, is almost surely a 321-avoiding block irreducible permutation of
 the infinite set $\{M+1,M+2,\cdots\}$.
\end{proposition}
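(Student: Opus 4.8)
The plan is to base everything on the unique decomposition of a $321$-avoiding permutation into block irreducible pieces, and to track how this decomposition behaves under $\mu_n^{321}$ as $n\to\infty$. \textbf{Step 1 (combinatorial backbone).} Every $\sigma\in S_n(321)$ factors uniquely as $\sigma=\rho_1*\rho_2*\cdots*\rho_m$, where each $\rho_i$ is a block irreducible $321$-avoiding permutation of a block of consecutive integers: one obtains $\rho_1$ from the smallest $k$ with $\sigma(\{1,\dots,k\})=\{1,\dots,k\}$ and iterates on $\{k+1,\dots,n\}$. Writing $C(x)=\sum_{j\ge0}C_jx^j$ and $B(x)=\sum_{j\ge1}|S^{\text{b-irr}}_{([j])}(321)|\,x^j$, this decomposition yields $C(x)=1/(1-B(x))$; combined with the Catalan identity $xC(x)^2=C(x)-1$ this gives $B(x)=xC(x)$, hence $|S^{\text{b-irr}}_{([j])}(321)|=C_{j-1}$, which is Lemma \ref{irr321}. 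A direct consequence is that under $\mu_n^{321}$, conditioned on the block-size vector $(J_1,\dots,J_m)$, the pieces are independent and each is uniform over the block irreducible $321$-avoiding permutations of its block, while $P(J_1=j_1,\dots,J_m=j_m)=\big(\prod_i C_{j_i-1}\big)/C_n$.

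\textbf{Step 2 (asymptotics of the initial blocks).} Using $C_m\sim\pi^{-1/2}4^m m^{-3/2}$ one has $C_{n-j}/C_n\to 4^{-j}$ for fixed $j$. Since $P(J_1=j_1,\dots,J_k=j_k)=\big(\prod_{i=1}^k C_{j_i-1}\big)C_{n-\sum j_i}/C_n$, for each fixed $k$ this gives
\[
P(J_1=j_1,\dots,J_k=j_k)\longrightarrow\prod_{i=1}^k\frac{C_{j_i-1}}{4^{j_i}}=\Big(\tfrac12\Big)^k\prod_{i=1}^kP(\hat X=j_i),
\]
where I used $C_{j-1}4^{-j}=\tfrac12 P(\hat X=j)$ and $\sum_{j\ge1}C_{j-1}4^{-j}=\tfrac14C(\tfrac14)=\tfrac12$. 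Reading off these limits, each successive block is, independently, ``finite'' with probability $\tfrac12$ and then has size law $\hat X$; hence the number $N$ of blocks that stabilize satisfies $P(N\ge k)=(\tfrac12)^k$, so $N=Y-1$ with $Y\sim\mathrm{Geom}(\tfrac12)$, and the stabilizing blocks have independent sizes $\hat X_1,\hat X_2,\dots$ with independent uniform block irreducible contents. This produces the main term $*_{n=0}^{Y-2}\Pi^{321;\text{b-irr}}_{([T^{\hat X}_n+1,T^{\hat X}_{n+1}])}$, and since these limits are genuine (not subsequential), this term is the same for every subsequence.

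\textbf{Step 3 (convergence and the tail).} I would invoke compactness of $S(\mathbb{N},\mathbb{N}^*)$ to extract a weakly convergent subsequence, and use that weak convergence there is equivalent to convergence of finite-dimensional distributions. The coordinates lying in the $N=Y-1$ stabilizing blocks converge jointly to the concatenation above by Steps 1--2. Set $M:=T^{\hat X}_{Y-1}$. Conditioned on $N=k$, block $k+1$ is the first block that fails to stabilize, so its size $L\to\infty$; consequently its successors occupy positions beyond every fixed coordinate and disappear in the limit, while block $k+1$ itself survives as the tail $Z$ on $\{M+1,M+2,\dots\}$. For the conditional statement, fix $Y=y$, $T^{\hat X}_{y-1}=M$. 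For finite $n$, block $y$ is a uniform block irreducible $321$-avoiding permutation of $\{M+1,\dots,M+L\}$, so $\sigma(\{M+1,\dots,M+k\})\ne\{M+1,\dots,M+k\}$ for all $k<L$, and $321$-avoidance is checkable on finitely many coordinates; if the limit is supported on $S_\infty$, so that $Z$ has no $\infty$-values and is a bijection of $\{M+1,M+2,\dots\}$, both properties pass to the limit for every finite $k$, whence $Z$ is almost surely a $321$-avoiding block irreducible permutation of $\{M+1,M+2,\dots\}$.

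\textbf{Main obstacle.} The difficulty is entirely in the tail $Z$, and this is why only partial results are available. Because $E\hat X=\infty$ (indeed $E\hat X^{1/2}=\infty$), the regenerative structure does not renew infinitely often: after a geometric number of microscopic blocks one reaches a single dominant block of size comparable to $n$, and the fate of its values as $n\to\infty$ is hard to control. One cannot rule out a priori that a positive fraction of its mass escapes to $\infty$---placing some coordinates of $Z$ at $\infty$---rather than settling into a genuine permutation of $\{M+1,M+2,\dots\}$. Deciding between these alternatives is exactly the question of whether the limit is supported on $S_\infty$, which I do not expect to resolve; hence the proposition is stated conditionally on $Z\in S_\infty$.
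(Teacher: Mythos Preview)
Your proposal is correct and follows essentially the same approach as the paper: both arguments rest on the block-irreducible decomposition, the identity $\mu_n^{321}(\text{first block has length }j)=C_{j-1}C_{n-j}/C_n$, and the $n\to\infty$ dichotomy that this first block is finite (with size law $\hat X$) with probability $\tfrac12$ and otherwise escapes to infinity, followed by regeneration. The only cosmetic differences are that you rederive Lemma~\ref{irr321} via the generating-function identity $B(x)=xC(x)$ rather than by comparing the two recurrences for $C_n$, and you package the iteration as a single joint-distribution limit for $(J_1,\dots,J_k)$ rather than as a step-by-step regenerative peeling; neither changes the substance.
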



Note that in Theorems \ref{312}-\ref{213}, the length of  each segment in the regenerative structure
is distributed as $X+1$, and the length of the first $n$ segments is given by $T_n^X+n$. Thus, it is of interest
to determine the growth rate of $T_n^X$.

\begin{proposition}\label{stable}
\begin{equation}\label{stablelimit}
\lim_{n\to\infty}\frac{T_n^X}{n^2}\stackrel{\text{dist}}{=}Z,
\end{equation}
where $Z$ is the one-sided stable distribution with     stability parameter $\frac12$ and characteristic function
$$
\phi(t)=Ee^{-itZ}=\exp\big(-\frac{\sqrt2}2\thinspace| t|^\frac12(1+i\thinspace\text{\rm sgn}(t)\big).
$$
\end{proposition}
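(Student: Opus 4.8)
The plan is to recognize $T_n^X = \sum_{j=1}^n X_j$ as a sum of i.i.d.\ copies of the heavy-tailed variable $X$ from \eqref{X}, and to invoke the classical stable limit theorem for sums of i.i.d.\ random variables in the domain of attraction of a one-sided stable law of index $\frac12$. The first step is to pin down the tail asymptotics of $X$. From the Remark following \eqref{X} we have $P(X=n)=\frac{C_n}{2\cdot 4^n}$ and $C_n\sim \pi^{-1/2}4^n n^{-3/2}$, so that $P(X=n)\sim \frac{1}{2\sqrt\pi}\, n^{-3/2}$. Summing the tail gives
\begin{equation*}
P(X\ge n)=\sum_{k\ge n}\frac{C_k}{2\cdot 4^k}\sim \frac{1}{2\sqrt\pi}\sum_{k\ge n}k^{-3/2}\sim \frac{1}{\sqrt\pi}\, n^{-1/2}.
\end{equation*}
This is a regularly varying tail of index $-\frac12$ with no slowly varying correction, which is exactly the hypothesis placing $X$ in the domain of attraction of a stable law with stability parameter $\alpha=\frac12$, totally skewed to the right (skewness $\beta=1$, since $X\ge 0$).

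Next I would identify the correct normalization. For $\alpha=\frac12$ the standard stable limit theorem (see e.g.\ Feller, \emph{An Introduction to Probability Theory and Its Applications}, Vol.~II, Ch.~IX.8 and XVII.5) asserts that there exist constants $a_n$ with $\frac{1}{a_n}\big(T_n^X-b_n\big)$ converging in distribution to a stable law, where $a_n$ is chosen so that $n\,P(X\ge a_n)\to c$ for an appropriate constant and $b_n$ is a centering term. Since $\alpha=\frac12<1$, the centering can be taken to be $0$, and the tail computation above forces $a_n\asymp n^{2}$: solving $n\cdot\frac{1}{\sqrt\pi}a_n^{-1/2}=O(1)$ gives $a_n\sim c\,n^2$. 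Thus $T_n^X/n^2$ converges in distribution to a one-sided stable law of index $\frac12$, and the only remaining task is to verify that the precise constant $\frac{1}{\sqrt\pi}$ in the tail yields exactly the scale appearing in the stated characteristic function $\phi(t)=\exp\!\big(-\tfrac{\sqrt2}{2}|t|^{1/2}(1+i\,\mathrm{sgn}(t))\big)$.

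The main obstacle will be this last bookkeeping of constants: matching the tail constant $\frac{1}{\sqrt\pi}$ to the specific scale parameter in $\phi$. I would carry this out by using the canonical Lévy–Khintchine representation of a totally right-skewed $\frac12$-stable law, whose characteristic function has the form $\exp\!\big(-\gamma|t|^{1/2}(1-i\,\mathrm{sgn}(t)\tan(\pi/4))\big)=\exp\!\big(-\gamma|t|^{1/2}(1-i\,\mathrm{sgn}(t))\big)$ for scale $\gamma>0$, and reconciling the sign convention (the proposition writes $Ee^{-itZ}$ rather than $Ee^{itZ}$, which flips the sign of the imaginary part and produces the $+i\,\mathrm{sgn}(t)$ in the statement). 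The relation between $\gamma$ and the tail constant $C_{\mathrm{tail}}=\frac{1}{\sqrt\pi}$ for index $\frac12$ is $\gamma=\big(C_{\mathrm{tail}}\,\Gamma(1-\tfrac12)\cos(\tfrac{\pi}{4})/\tfrac12\big)^{1/2}$ up to the standard normalizing factor; plugging in $\Gamma(\tfrac12)=\sqrt\pi$ and $\cos(\tfrac\pi4)=\frac{\sqrt2}{2}$ should collapse the $\sqrt\pi$ factors and deliver exactly $\gamma=\frac{\sqrt2}{2}$. Verifying this numerically against the displayed $\phi$ is the one genuinely delicate computation; everything else is a direct application of the stable limit theorem once the tail estimate is in hand.
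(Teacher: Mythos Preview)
Your approach is correct and will yield the stated limit, but it differs from the paper's proof. The paper exploits the closed-form generating function $\sum_{n\ge 0}C_n z^n=(1-\sqrt{1-4z})/(2z)$ to write the characteristic function of $X$ explicitly as
\[
E e^{-isX_1}=\frac{1-\sqrt{1-e^{-is}}}{e^{-is}},
\]
and then computes $\big(E e^{-i(t/n^2)X_1}\big)^n$ directly by expanding $\sqrt{1-e^{-it/n^2}}$ to first order in $1/n$. This is a short, self-contained calculation that delivers the constant $\tfrac{\sqrt2}{2}$ without any appeal to the general stable limit theorem or to Tauberian-type constant matching.

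Your route via tail asymptotics and the domain-of-attraction theorem is more conceptual and makes clear \emph{why} the index is $\tfrac12$ and the normalization is $n^2$, but as you yourself note, the constant bookkeeping is the delicate step. (A cleaner way to carry it out than the formula you sketched: use the Tauberian relation $1-Ee^{-\lambda X}\sim C_{\mathrm{tail}}\Gamma(\tfrac12)\lambda^{1/2}$ as $\lambda\to 0$, which with $C_{\mathrm{tail}}=1/\sqrt\pi$ and $\Gamma(\tfrac12)=\sqrt\pi$ gives Laplace exponent $\lambda^{1/2}$ for the limit of $T_n^X/n^2$; then analytically continue $e^{-\lambda^{1/2}}$ to $\lambda=it$ to read off $\phi$.) The paper's direct computation sidesteps all of this by never leaving the characteristic-function level, which is tidier here precisely because the generating function of $X$ is available in closed form.
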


In section \ref{prelim} we will state and prove several preliminary facts that will be used in the proofs of the main results,
and we will prove Lemma \ref{irr321}.
The five sections that follow section \ref{prelim} give the proofs respectively of
 Proposition \ref{123and132},  Theorems \ref{312}-\ref{213} and Proposition \ref{321}.
In the final section we proof Proposition \ref{stable}.

\bf\noindent An important note regarding the proofs.\rm\ The same basic idea is used in the proofs of
Theorems \ref{312}-\ref{213} (via Lemma \ref{13middle} in section \ref{prelim}).
A variant of that idea is used for the proof of Proposition \ref{321} (via Lemma \ref{irr321}).    However, to write down a complete and entirely rigorous proof is extremely tedious and may well obscure the relative simplicity
of the ideas behind the proofs. Thus, for the proof of Theorem \ref{312}, we begin with a rather
 verbal explanation  of the proof, and
then   prove completely rigorously the first few steps of the proof. From this, it will be clear that
one can precede similarly to obtain the entire proof.
After that, for the proofs of Theorems \ref{231} and \ref{213} and Proposition \ref{321}, we will only give the rather verbal  explanation, the rigorous
proof following very similarly to that of Theorem \ref{312}. On the other hand, the proof of Proposition \ref{123and132} is
short and direct.

\section{Some Preliminary Results}\label{prelim}
We begin with the proof of  Lemma \ref{irr321}, which  appeared in the introductory section.

\noindent \it Proof of Lemma \ref{irr321}.\rm\
It suffices to prove the lemma for $S_j^{\text{b-irr}}(321)=S_{([j])}^{\text{b-irr}}(321)$.
For $1\le j\le n< \infty$, let $S_n^{\text{b-irr};j}(321)$ denote the set of permutations in $S_n(321)$ which map
$[j]$ to $[j]$ but do not map $[k]$ to $[k]$ for $1\le k<j$. (In this notation
$S_n^{\text{b-irr};n}(321)=S_n^{\text{b-irr}}(321)$.)
We have
\begin{equation}\label{breakdown}
|S_n(321)|=C_n=\sum_{j=1}^n|S_n^{\text{b-irr};j}(321)|,\ 1\le n<\infty.
\end{equation}
It is well known that a permutation in $S_n$ belongs to $S_n(321)$   if and only if it is composed of two increasing
subsequences \cite{B}.
Thus, $\sigma\in S_n^{\text{b-irr};j}(321)$ if and only if
$\sigma=\tau*\nu$, where $\tau\in S_j^{\text{b-irr}}(321)$ and
$\nu\in S_{([j+1,n])}(321)$, that is, $\nu$  is a 321-avoiding permutation of $[j+1,n]$.
Of course, the number of 321-avoiding permutations  of $[j+1,n]$ is
$C_{n-j}$. Thus, $|S_n^{\text{b-irr};j}(321)|=|S_j^{\text{b-irr}}(321)|C_{n-j}$.
Substituting this in \eqref{breakdown}
gives
\begin{equation}\label{breakdownagain}
C_n=\sum_{j=1}^n|S_j^{\text{b-irr}}(321)|C_{n-j},\ n\ge 1.
\end{equation}
On the other hand, the fundamental recurrence relation for Catalan numbers \cite{P} gives
\begin{equation}\label{Catrec}
C_n=\sum_{j=1}^nC_{j-1}C_{n-j},\ n\ge1.
\end{equation}
Equating \eqref{breakdownagain} and \eqref{Catrec} successively for $n=1,2,\cdots$ shows
that $|S_j^{\text{b-irr}}(321)|=C_{j-1}$, for all $j\ge1$.
\hfill $\square$

\noindent \bf Remark.\rm\ From the proof of the lemma, we obtain the following fact, which will be used later:
\begin{equation}\label{irrnj}
|S_n^{\text{b-irr};j}(321)|=C_{j-1}C_{n-j}.
\end{equation}

The following lemma states a well-known fact about permutations avoiding certain patterns of length three. For completeness, we provide the short proof.
\begin{lemma}\label{13middle}
For $1\le j\le n$,

\noindent i. $\mu_n^{312}(\sigma^{-1}_1=j)=\mu_n^{213}(\sigma^{-1}_1=j)=\frac{C_{j-1}C_{n-j}}{C_n}$;

\noindent ii. $\mu_n^{231}(\sigma^{-1}_n=j)=\mu_n^{132}(\sigma^{-1}_n=j)=\frac{C_{j-1}C_{n-j}}{C_n}$.

\end{lemma}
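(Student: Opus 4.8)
The plan is to condition on the position of the extreme value and show that $\tau$-avoidance forces $\sigma$ to split, \emph{as a concatenation}, into two independent $\tau$-avoiding blocks. I focus first on $\tau=312$ and part i, where the statistic $\mu_n^{312}(\sigma^{-1}_1=j)$ is the probability that the value $1$ occupies position $j$, i.e. that $\sigma_j=1$. First I would establish the structural claim: a permutation $\sigma\in S_n$ with $\sigma_j=1$ avoids $312$ if and only if $\sigma=\alpha*(1)*\beta$, where $\alpha$ is a $312$-avoiding permutation image of $[2,j]$ occupying positions $1,\dots,j-1$ and $\beta$ is a $312$-avoiding permutation image of $[j+1,n]$ occupying positions $j+1,\dots,n$. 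The forward implication rests on the observation that the global minimum $1$ can, in any occurrence of $312$, only play the role of the middle symbol "$1$"; hence for every $a<j<b$ the triple $(\sigma_a,1,\sigma_b)$ fails to be a $312$ exactly when $\sigma_a<\sigma_b$. Thus every value left of position $j$ is smaller than every value to its right, which forces the left positions to carry $[2,j]$ and the right positions to carry $[j+1,n]$, and the restrictions $\alpha,\beta$ are then themselves $312$-avoiding.

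For the converse I would verify that no occurrence of $312$ can straddle the two blocks. If such an occurrence used position $j$, then the value $1$ there must be the middle symbol "$1$", so its outer entries are split across the blocks and satisfy $\sigma_{p_1}<\sigma_{p_3}$ (left $<$ right), contradicting the requirement $\sigma_{p_1}>\sigma_{p_3}$ of the pattern; and any occurrence avoiding position $j$ would need its first (largest) entry in the left block and another entry in the right block, impossible since left values are all smaller than right values. Granting the claim, the number of $312$-avoiding $\sigma\in S_n$ with $\sigma_j=1$ is $|S_{([2,j])}(312)|\cdot|S_{([j+1,n])}(312)|=C_{j-1}C_{n-j}$, using $|S_m(\tau)|=C_m$, and dividing by $|S_n(312)|=C_n$ gives the first equality in part i.

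For the remaining three patterns I would invoke the same mechanism, the only change being whether the forced monotonicity is increasing or decreasing and whether the distinguished entry is the minimum or the maximum; in every case the length-three pattern forces this extreme entry into its \emph{middle} symbol. For $213$ the value $1$ is again the middle "$1$", but avoidance now forces $\sigma_a>\sigma_b$ for $a<j<b$, so the left block carries the largest $j-1$ values, and the count is unchanged at $C_{j-1}C_{n-j}$. For part ii the distinguished entry is the global maximum $n$ at position $j$: for $231$ (resp. $132$) the value $n$ can only play the middle symbol "$3$", and avoidance forces the left values to be smaller (resp. larger) than the right values, again giving $C_{j-1}C_{n-j}$ admissible permutations. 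Alternatively, I could deduce part ii from part i by symmetry, since the complement map $\sigma\mapsto(n+1-\sigma_i)_{i}$ is a bijection $S_n(312)\to S_n(132)$ and $S_n(213)\to S_n(231)$ carrying the event $\{\sigma_j=1\}$ to $\{\sigma_j=n\}$, while the reverse map relates $312$ and $213$ and the formula $C_{j-1}C_{n-j}/C_n$ is invariant under $j\mapsto n+1-j$.

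The step I expect to be the main obstacle is the converse (the "if" direction) of the structural claim, namely ruling out a pattern occurrence that straddles the two blocks. This is precisely where the role forced on the extreme value — always the middle symbol of the length-three pattern — does the work, since it is exactly that role which makes any cross-block occurrence contradict the forced monotonicity; consequently this point must be checked, with the appropriate orientation, for each of the four patterns rather than asserted once and for all.
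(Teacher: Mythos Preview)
Your proposal is correct and follows essentially the same approach as the paper's proof: both arguments condition on the position of the extreme value, observe that avoidance forces the left and right blocks to carry disjoint value-intervals, and count via $C_{j-1}C_{n-j}$. Your treatment is in fact more careful than the paper's, which asserts the ``if and only if'' structural claim without verifying the converse direction you single out; your additional symmetry argument (complement and reversal) is a nice alternative the paper does not mention.
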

\begin{proof}
A 312-avoiding permutation $\sigma\in S_n$ has the property that all of the numbers in the positions to the left
of the position occupied by 1 are smaller than all of the numbers in the positions to the right
of the position occupied by 1. That is, if $\sigma^{-1}_1=j_1$, then $\{2,\cdots, j_1\}$ appear in the first
$j_1-1$ positions of $\sigma$ and $\{j_1+1,\cdots, n\}$ appear in the last $n-j_1$ positions of $\sigma$.
In fact then, it follows that a permutation $\sigma\in S_n$ satisfying $\sigma^{-1}_1=j$ will be 312-avoiding if and only if
$(\sigma_1,\cdots, \sigma_{j-1})$  is a 312-avoiding permutation image of $\{2,\cdots, j_1\}$ and
$(\sigma_{j+1},\cdots, \sigma_n)$ is a 312-avoiding permutation image of $\{j+1,\cdots, n\}$.
The proof of the lemma  for the case  $\mu_n^{312}$ now follows from the fact that there are $C_{j-1}$ 321-avoiding permutation images of
$\{2,\cdots, j\}$ and $C_{n-j}$  312-avoiding permutation images of
$\{j+1,\cdots, n\}$.

The proof for $\mu_n^{213}$ follows similarly, using the fact that a 213-avoiding permutation $\sigma\in S_n$ has the property that all of the numbers in the positions
to the left of the position occupied by 1 are larger than all of the numbers in the positions to the right of the position occupied by 1.
The proof for $\mu_n^{231}$ ($\mu_n^{132}$) follows similarly from the fact that a 231-avoiding (132-avoiding) permutation $\sigma\in S_n$ has the property that all of the numbers in the
positions to the left of the position occupied by $n$ are smaller (larger) than all of the numbers in the positions to the right of the position occupied by $n$.
\end{proof}
\begin{lemma}\label{Catalanlemma}
For $n\in\mathbb{N}$, let $\nu_n$ be the probability measure on $\mathbb{N}^*$ satisfying
$$
\begin{aligned}
&\nu_n(j)=\frac{C_{j-1}C_{n-j}}{C_n}, \ j\in[1,n];\\
&\nu_n(j)=0, \ j\in\mathbb{N}^*-[1,n].
\end{aligned}
$$
Define the probability measure $\nu_n^{\text{rev}}$ on $\mathbb{N}^*$ by
$$
\begin{aligned}
&\nu_n^{\text{rev}}(j)=\nu_n(n+1-j), \ j\in[n];\\
&\nu^{\text{rev}}_n(j)=0, \ j\in\mathbb{N}^*-[1,n].
\end{aligned}
$$
Then $\{\nu_n\}_{n=1}^\infty$ and $\{\nu_n^{\text{rev}}\}_{n=1}^\infty$ both converge weakly to the probability measure $\nu$ on $\mathbb{N}^*$ satisfying
$$
\begin{aligned}
&\nu(j)=C_{j-1}(\frac14)^j, \ j\in\mathbb{N};\\
&\nu(\infty)=\frac12.
\end{aligned}
$$
\end{lemma}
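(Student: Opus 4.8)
The plan is to prove weak convergence on the compact metric space $\mathbb{N}^*$ by first establishing convergence of the individual atoms and then accounting for the mass that escapes to the point $\infty$. Recall that a function $f$ lies in $C(\mathbb{N}^*)$ precisely when it is a bounded function on $\mathbb{N}$ with $\lim_{j\to\infty}f(j)=f(\infty)$; consequently weak convergence $\nu_n\Rightarrow\nu$ is equivalent to $\int f\,d\nu_n\to\int f\,d\nu$ for all such $f$. I would begin by showing $\nu_n(j)\to\nu(j)$ for each fixed finite $j$. Since $\nu_n(j)=C_{j-1}\,\frac{C_{n-j}}{C_n}$, this reduces entirely to the behavior of the ratio $\frac{C_{n-j}}{C_n}$ with $j$ fixed and $n\to\infty$. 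Using either the asymptotic $C_n\sim\pi^{-1/2}4^nn^{-3/2}$ recorded in the Remark following \eqref{X}, or the exact one-step ratio $\frac{C_{n-1}}{C_n}=\frac{n+1}{2(2n-1)}\to\frac14$ iterated $j$ times, one obtains $\frac{C_{n-j}}{C_n}\to 4^{-j}$, and hence $\nu_n(j)\to C_{j-1}(\tfrac14)^j=\nu(j)$.

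Next I would verify that $\nu$ is genuinely a probability measure and locate the escaping mass. By the identity $\sum_{k=0}^\infty C_k(\tfrac14)^k=2$ quoted in the same Remark, $\sum_{j=1}^\infty \nu(j)=\tfrac14\sum_{k=0}^\infty C_k(\tfrac14)^k=\tfrac12$, which matches $\nu(\infty)=\tfrac12$ and confirms $\nu(\mathbb{N}^*)=1$. To upgrade atomwise convergence to weak convergence, I would fix $f\in C(\mathbb{N}^*)$ with $|f|\le M$ and $\varepsilon>0$, choose $K$ so that $|f(j)-f(\infty)|<\varepsilon$ for $j>K$, and split $\int f\,d\nu_n=\sum_{j=1}^K f(j)\nu_n(j)+\sum_{j>K}f(j)\nu_n(j)$. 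On the tail I replace $f(j)$ by $f(\infty)$ at a total cost at most $\varepsilon\sum_{j>K}\nu_n(j)\le\varepsilon$, and use $\sum_{j>K}\nu_n(j)=1-\sum_{j=1}^K\nu_n(j)\to 1-\sum_{j=1}^K\nu(j)$, which follows from the finitely many atomwise limits already established. Letting $n\to\infty$, then $K\to\infty$ (so that $1-\sum_{j\le K}\nu(j)\to\nu(\infty)=\tfrac12$), and finally $\varepsilon\to 0$, yields $\int f\,d\nu_n\to\int f\,d\nu$, which is the desired weak convergence.

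For $\nu_n^{\text{rev}}$ there is essentially nothing further to prove. Because the weight $C_{j-1}C_{n-j}$ is invariant under the substitution $j\mapsto n+1-j$, one has $\nu_n^{\text{rev}}(j)=\nu_n(n+1-j)=\frac{C_{n-j}C_{j-1}}{C_n}=\nu_n(j)$, so in fact $\nu_n^{\text{rev}}=\nu_n$ identically, and the conclusion for $\{\nu_n^{\text{rev}}\}$ is immediate from that for $\{\nu_n\}$.

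I do not expect any serious obstacle in this argument, as every ingredient is either an elementary Catalan computation or a standard weak-convergence manipulation. The only point requiring genuine care is the bookkeeping of the escaping mass in the final $\varepsilon$-argument: one must confirm that the deficit $1-\sum_{j\le K}\nu(j)$ tends to exactly $\nu(\infty)=\tfrac12$ rather than partially leaking into finite coordinates. This is precisely what the generating-function identity $\sum_{k}C_k(\tfrac14)^k=2$ guarantees, so once that identity is invoked the proof closes cleanly.
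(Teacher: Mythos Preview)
Your proof is correct and follows essentially the same route as the paper: establish $\nu_n(j)\to C_{j-1}(\tfrac14)^j$ for each fixed $j$ via $\frac{C_{n-j}}{C_n}\to 4^{-j}$, and then use the generating-function identity $\sum_k C_k(\tfrac14)^k=2$ to account for the missing mass at $\infty$. You supply more detail than the paper (which simply declares ``this proves the lemma'' after those two facts), and your observation that in fact $\nu_n^{\text{rev}}=\nu_n$ identically is a crisp way of handling what the paper dispatches with the word ``symmetry.''
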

\bf \noindent Remark.\rm\ Note that $X+1$ has the distribution of $\nu(\cdot\thinspace|\mathbb{N})$, where $X$ is as in \eqref{X}.
\medskip

\begin{proof}
By symmetry, it is enough to prove the lemma for $\{\nu_n\}_{n=1}^\infty$. A direct calculation shows that for each fixed $j$, $\lim_{n\to\infty}\frac{C_{n-j}}{C_n}=(\frac14)^j$.
Thus, $\lim_{n\to\infty}\nu_n(j)=C_{j-1}(\frac14)^j$, for $j\ge1$.
As noted in the remark following \eqref{X},
$\sum_{n=0}^\infty C_n(\frac14)^n=2$. Thus, $\sum_{j=1}^\infty C_{j-1}(\frac14)^j=\frac12$.
This proves the lemma.
\end{proof}

\section{Proof of Proposition \ref{123and132}}\label{123and132pr}
\noindent \it Proof of  i.\rm\
For fixed $j,M\in\mathbb{N}$, we give an upper bound on $\mu_n^{123}(\sigma_j=M)$.
To construct a permutation $\sigma\in S_n(123)$ satisfying $\sigma_j=M$, there are certainly no more
than $(n-1)\cdots (n-j+1)$ ways to choose the values of $\{\sigma_1,\cdots, \sigma_{j-1}\}$.
Having chosen $\{\sigma_1,\cdots, \sigma_{j-1}\}$,
there are at least $n-M-j+1$ values larger than $M$ among
the numbers $\{\sigma_{j+1},\cdots, \sigma_n\}$.
Since $\sigma_j=M$,
all the values larger than $M$  among $\{\sigma_{j+1},\cdots, \sigma_n\}$ must appear in decreasing order.
Thus, at least $n-M-j+1$ of the values among $\{\sigma_{j+1},\cdots, \sigma_n\}$ must appear in decreasing order.
So with regard to $n-M-j+1$ such values, the only choice we have is which $n-M-j+1$ spaces out of $n-j$ spaces
 to use for them.
Therefore, we conclude that
$$
\mu_n^{123}(\sigma_j=M)\le\frac1{C_n} (n-1)\cdots (n-j+1)\binom{n-j}{n-M-j+1}(M-1)!\le \frac{n^{j+M-2}}{C_n}.
$$
Thus, for any $j,L\in\mathbb{N}$,
$$
\lim_{n\to\infty}\mu_n^{123}(\sigma_j\le L)\le\lim_{n\to\infty} L\frac{n^{j+L-2}}{C_n}=0.
$$
From this it follows that the distribution of
any weak limit of $\{\mu_n^{123}\}_{n=1}^\infty$ must be supported on the singleton $\infty^{(\infty)}$.
\medskip

\noindent \it Proof of ii.\rm\
For fixed $j,M\in\mathbb{N}$, we give an upper bound on $\mu_n^{132}(\sigma_j=M)$.
To construct a permutation $\sigma\in S_n(132)$ satisfying $\sigma_j=M$, there are certainly no more
than $(n-1)\cdots (n-j+1)$ ways to choose the values of $\{\sigma_1,\cdots, \sigma_{j-1}\}$.
Having chosen $\{\sigma_1,\cdots, \sigma_{j-1}\}$,
there are at least $n-M-j+1$ values larger than $M$ among
the numbers $\{\sigma_{j+1},\cdots, \sigma_n\}$.
Since $\sigma_j=M$,
all the values larger than $M$  among $\{\sigma_{j+1},\cdots, \sigma_n\}$ must appear in increasing order.
Thus, at least $n-M-j+1$ of the values among $\{\sigma_{j+1},\cdots, \sigma_n\}$ must appear in increasing order.
So with regard to $n-M-j+1$ such values, the only choice we have is which $n-M-j+1$ spaces out of $n-j$ spaces
to use for them.
Therefore, we conclude that
$$
\mu_n^{132}(\sigma_j=M)\le\frac1{C_n} (n-1)\cdots (n-j+1)\binom{n-j}{n-M-j+1}(M-1)!\le \frac{n^{j+M-2}}{C_n}.
$$
The proof is now completed  as it was in part i.

\hfill $\square$
\section{Proof of Theorem \ref{312}}\label{312pr}
We will need the following additional notation. For a permutation image $\sigma^{\text{im}}_I=(i_1\ i_2 \ \cdots\ i_l)$
of a block $I=\{j+1,\cdots, j+l\}$, let $\sigma_I^{\text{im}}-j$ denote the permutation $\tau\in S_l$ given by
$\tau_k=i_k-j,\ k\in[l]$.
Also, for any $I\subset\mathbb{N}$,  let  $\Sigma^{\text{im}}_{I}$   denote the collection of all permutation images of $I$.

By Lemma \ref{13middle},
\begin{equation}\label{CC}
\mu_n^{312}(\sigma^{-1}_1=j_1)=\frac{C_{j_1-1}C_{n-j_1}}{C_n},\ j_1\in[1,n].
\end{equation}
From the proof of \eqref{CC} in Lemma \ref{13middle}, it   follows that
\begin{equation}\label{CCcond}
\mu_n^{312}(\sigma^{\text{im}}_{I_1}*(1)*\sigma^{\text{im}}_{I_2})|\sigma^{-1}_1=j_1)=\mu^{312}_{j_1-1}(\sigma^{\text{im}}_{I_1}-1)\mu^{312}_{n-j_1}(\sigma^{\text{im}}_{I_2}-j_1),
\end{equation}
where $1\le j_1\le n$, $\sigma^{\text{im}}_{I_1}$ is    a permutation image of  $I_1=[2,j_1]$ and
 $\sigma^{\text{im}}_{I_2}$ is a permutation  image of $I_2=[j_1+1, n]$.

As noted at the end of the first section, we first
give a rather verbal explanation  of the proof.
From \eqref{CC} and Lemma \ref{Catalanlemma} with  the remark following it, along with \eqref{X} and \eqref{Tsums},
it follows that as $n\to\infty$,  $\sigma^{-1}_1$ will be carried off to  $\infty$ with probability $\frac12$,  and  will converge
to the distribution $X_1+1$  with probability $\frac12$.  Consider the former case.
Let $\sigma^{-1}_1=j_1$ be very large. Akin to the proof of Lemma \ref{13middle}, since the first $j_1-1$ places constitute a 312-avoiding permutation
of $[2,j_1]$, it follows that from among these numbers, all the numbers in the positions to the left of $\sigma^{-1}_2$ are smaller than all the numbers in positions to the right of $\sigma^{-1}_2$.
Thus, the same reasoning as in \eqref{CC} gives
$\mu_n^{312}(\sigma^{-1}_2=j_2|\sigma^{-1}_1=j_1)=\frac{C_{j_2-1}C_{j_1-j_2-1}}{C_{j_1-1}},\ j_2\in[1, j_1-1]$. Thus, as $j_1\to\infty$, it follows that
$\sigma^{-1}_2$ will be carried off to $\infty$ with probability $\frac12$ and  will converge
to the distribution $X_1+1$ with probability $\frac12$. Continuing like this, eventually, we will arrive as some $m\in\mathbb{N}$
such that $\sigma^{-1}_1,\cdots, \sigma^{-1}_{m-1}$ were all carried off to $\infty$, but $\sigma^{-1}_m$ converges
to the distribution $X_1+1$. Note that the probability of this occurring at any specific $m$ is $(\frac12)^m$; that is, this occurrence time
has the distribution of $Y_1$, as in \eqref{Tsums}.
Thus, what we see so far is that the numbers $1,\cdots, Y_1-1$ have escaped to $\infty$, the number $Y_1$ is in position $X_1+1$, and by \eqref{CCcond},  the first $X_1$ positions
are occupied by a permutation image $\sigma^{\text{im}}_I$ of $I=[Y_1+1,Y_1+X_1]$ and this permutation image has the uniform distribution on 312-avoiding permutation images
of $[Y_1+1,Y_1+X_1]$. Stating this in  the notation of \eqref{Tsums} and \eqref{Pi}, we have that  the first $T^X_1+1$ positions
look like $\Pi^{312}_{[T^Y_1+1,T^Y_1+T^X_1]}*(T^Y_1)$. This is just as in the statement of the theorem. Now everything after position $T^X_1+1$ is iterated, with the smallest number
still available there being $T^Y_1+T^X_1+1$.
By the same reasoning, the first of these numbers that does not run off to $\infty$ will be $T^Y_2+T^X_1$, its position will be $T^X_2+2$ and in positions
$[T^X_1+2,T^X_2+1]$ will appear a uniformly random 312-avoiding permutation image of   $[T_2^Y+T_1^X+1,T_2^Y+T_2^X]$, that is, $\Pi^{312}_{[T_2^Y+T_1^X+1,T_2^Y+T_2^X]}$.
We now have the initial part of the limiting random variable being  $\Pi^{312}_{[T^Y_1+1,T^Y_1+T^X_1]}*(T^Y_1)*\Pi^{312}_{[T_2^Y+T_1^X+1,T_2^Y+T_2^X]}*(T^Y_2+T^X_1)$,
as in the theorem.

We now turn to the  rigorous proof.
Using
 Lemma \ref{Catalanlemma} and the remark following it, along with \eqref{CC} and \eqref{CCcond}, it follows that
\begin{equation}\label{end1}
\begin{aligned}
&\lim_{n\to\infty}\mu_n^{312}(\sigma^{\text{im}}_{I_1}*(1)*\Sigma^{\text{im}}_{[j_1+1,n]}))=\\
&\frac12P(X_1=j_1-1)\mu^{312}_{j_1-1}(\sigma^{\text{im}}_{I_1}-1)=
P( T^X_1=j_1-1, T^Y_1=1)\mu^{312}_{j_1-1}(\sigma^{\text{im}}_{I_1}-1),\\
& \text{for}\ j_1\in\mathbb{N} \ \text{and}\ \sigma^{\text{im}}_{I_1} \ \text{a permutation image of}\ I_1=[2, j_1],
\end{aligned}
\end{equation}
where $T^X_1$ and $T^Y_1$ are as in \eqref{Tsums}.

Repeating the  procedure  that yielded \eqref{CC} and \eqref{CCcond}, we have
\begin{equation}\label{CCagain}
\begin{aligned}
&\mu_n^{312}(\sigma^{-1}_{j_1+1}=j_2|\sigma^{-1}_1=j_1)=\frac{C_{j_2-j_1-1}C_{n-j_2}}{C_{n-j_1}},\ j_2\in[j_1+1, n];\\
&\mu_n^{312}(\sigma^{-1}_2=j_2|\sigma^{-1}_1=j_1)=\frac{C_{j_2-1}C_{j_1-j_2-1}}{C_{j_1-1}},\ j_2\in[1, j_1-1],
\end{aligned}
\end{equation}
and then we have
\begin{equation}\label{CCcondagain1}
\begin{aligned}
&\mu_n^{312}(\sigma^{\text{im}}_{I_1}*(1)*\sigma^{\text{im}}_{I_2}*(j_1+1)*\sigma^{\text{im}}_{I_3})|\sigma^{-1}_1=j_1, \sigma^{-1}_{j_1+1}=j_2)=\\
&\mu^{312}_{j_1-1}(\sigma^{\text{im}}_{I_1}-1)\mu^{312}_{j_2-j_1-1}(\sigma^{\text{im}}_{I_2}-j_1)
\mu^{312}_{n-j_2}(\sigma^{\text{im}}_{I_3}-j_2),\ 1\le j_1<j_2\le n,
\end{aligned}
\end{equation}
where   $\sigma^{\text{im}}_{I_1}$ is    a permutation image of $I_1=[2, j_1]$,
 $\sigma^{\text{im}}_{I_2}$ is a permutation  image of $I_2=[j_1+2, j_2]$
 and $\sigma^{\text{im}}_{I_3}$ is a permutation image of $I_3=[j_2+1, n]$, and we have
 \begin{equation}\label{CCcondagain2}
\begin{aligned}
&\mu_n^{312}(\sigma^{\text{im}}_{I_1}*(2)*\sigma^{\text{im}}_{I_2}*(1)*\sigma^{\text{im}}_{I_3})|\sigma^{-1}_1=j_1, \sigma^{-1}_{2}=j_2)=\\
&\mu^{312}_{j_2-1}(\sigma^{\text{im}}_{I_1}-2)\mu^{312}_{j_1-1-j_2}(\sigma^{\text{im}}_{I_2}-j_2-1)
\mu^{312}_{n-j_1}(\sigma^{\text{im}}_{I_3}-j_1),\ 1\le j_2<j_1\le n,
\end{aligned}
\end{equation}
where $\sigma^{\text{im}}_{I_1}$ is a permutation image of $I_1=[3,j_2+1]$, $\sigma^{\text{im}}_{I_2}$ is a permutation
image of $I_2=[j_2+2,j_1]$ and $\sigma^{\text{im}}_{I_3}$ is a permutation image
of $I_3=[j_1+1,n]$.

Using Lemma \ref{Catalanlemma} along with \eqref{CC},  \eqref{CCcondagain2} and the second equation in \eqref{CCagain}, we have
\begin{equation}\label{end2}
\begin{aligned}
&\lim_{M\to\infty}\lim_{n\to\infty}
\mu_n^{312}(\sigma^{\text{im}}_{I_1}*(2)*\Sigma^{\text{im}}_{[j_2+2,n]}, \ \sigma^{-1}_1\ge M)=\\
&\frac14P(X_1=j_2-1)\mu^{312}_{j_2-1}(\sigma^{\text{im}}_{I_1}-2)=\\
&P(T_1^X=j_2-1, T^Y_1=2)\mu^{312}_{j_2-1}(\sigma^{\text{im}}_{I_1}-2),\\
&\text{for}\ j_2\in\mathbb{N}\ \text{and}\ \sigma^{\text{im}}_{I_1}\ \text{a permutation image of}\ I_1=[3, j_2+1].
\end{aligned}
\end{equation}

Repeating the procedure  yet again, we have
\begin{equation}\label{CCyetagain}
\begin{aligned}
&\mu_n^{312}(\sigma^{-1}_{j_2+1}=j_3|\sigma^{-1}_1=j_1  ,\sigma^{-1}_{j_1+1}=j_2)=\frac{C_{j_3-j_2-1}C_{n-j_3}}{C_{n-j_3}},\
 j_3\in[j_2+1,\cdots, n],\ j_1<j_2;\\
\end{aligned}
\end{equation}
and then applying this to
 \eqref{CCcondagain2} we have
\begin{equation}\label{CCcondyetagain2}
\begin{aligned}
&\mu_n^{312}(\sigma^{\text{im}}_{I_1}*(3)*\sigma^{\text{im}}_{I_2}*(2)*\sigma^{\text{im}}_{I_3}*(1)*\sigma^{\text{im}}_{I_4})|\sigma^{-1}_1=j_1, \sigma^{-1}_2=j_2,
\sigma^{-1}_3=j_3)=\\
&\mu^{312}_{j_3-1}(\sigma^{\text{im}}_{I_1}-3)\mu^{312}_{j_2-1-j_3}(\sigma^{\text{im}}_{I_2}-j_3-2)
\mu_{j_1-1-j_2}^{312}(\sigma^{\text{im}}_{I_3}-j_2-1)
\mu^{312}_{n-j_1}(\sigma^{\text{im}}_{I_4}-j_1),\\
& 1\le j_3<j_2<j_1\le n,
\end{aligned}
\end{equation}
where $\sigma^{\text{im}}_{I_1}$ is a permutation image of $I_1=[4,j_3+2]$,
$\sigma^{\text{im}}_{I_2}$ is a permutation image of $I_2=[j_3+3,j_2+1]$,
$\sigma^{\text{im}}_{I_3}$ is a permutation image of $I_3=[j_2+2,j_1]$ and
$\sigma^{\text{im}}_{I_4}$ is a permutation  image of $I_4=[j_1+1,n]$.
Using Lemma \ref{Catalanlemma}  along with \eqref{CC}, the second equation in \eqref{CCagain}, \eqref{CCyetagain} and \eqref{CCcondyetagain2}, we have
\begin{equation}\label{end3}
\begin{aligned}
&\lim_{M\to\infty}\lim_{n\to\infty}\mu_n^{312}(\sigma^{\text{im}}_{I_1}*(3)*\Sigma^{\text{im}}_{[j_3+3,n]},\ \sigma^{-1}_1\ge M,\ \sigma^{-1}_2\ge M)=\\
&\frac18P(X_1=j_3-1)\mu^{312}_{j_3-1}(\sigma^{\text{im}}_{I_1}-3)=P(T_1^X=j_3-1,T_1^Y=3)\mu^{312}_{j_3-1}(\sigma^{\text{im}}_{I_1}-3),
\end{aligned}
\end{equation}
for $j_3\in\mathbb{N}$ and $\sigma^{\text{im}}_{I_1}$  a permutation image of $I_1=[4,j_3+2]$.

It is clear from \eqref{end1},\eqref{end2} and \eqref{end3} that if we continue in this vein we obtain
\begin{equation}\label{allends}
\begin{aligned}
&\lim_{M\to\infty}\lim_{n\to\infty}\mu_n^{312}(\sigma^{\text{im}}_{I_1}*(i)*\Sigma^{\text{im}}_{[j_i+i,n]},\ \sigma^{-1}_1\ge M,\cdots\ \sigma^{-1}_{i-1}\ge M)=\\
&(\frac12)^iP(X_1=j_i-1)\mu^{312}_{j_i-1}(\sigma^{\text{im}}_{I_1}-i)=P(T_1^X=j_i-1,T_1^Y=i)\mu^{312}_{j_i-1}(\sigma^{\text{im}}_{I_1}-i),
\end{aligned}
\end{equation}
for $j_i\in\mathbb{N}$ and $\sigma^{\text{im}}_{I_1}$  a permutation image of $I_1=[i+1,j_i+i-1]$.
This shows that a random variable whose distribution is that of  a weakly convergent subsequence of $\{\mu_n^{312}\}_{n=1}^\infty$ must
be of the form $\Pi^{312}_{[T^Y_1+1,T^Y_1+T^X_1]}*(T^Y_1)*Z$,
for some random $Z$ distributed on $S(\mathbb{N}-[1,T^X_1+1],\mathbb{N}^*)$.

We now need to continue and peel off the next component from $Z$. We just show the following step.
Using Lemma \ref{Catalanlemma} along with \eqref{CC},     \eqref{CCcondagain1} and the first equation in \eqref{CCagain}, we have
\begin{equation}
\begin{aligned}
&\lim_{n\to\infty}\mu_n^{312}(\sigma^{\text{im}}_{I_1}*(1)*\sigma^{\text{im}}_{I_2}*(1+j_1)*\Sigma_{[j_2+1,n]}))=\\
&\frac14P(X_1=j_1-1)P(X_2=j_2-j_1-1)\mu^{312}_{j_1-1}(\sigma^{\text{im}}_{I_1}-1)\mu^{312}_{j_2-j_1-1}(\sigma^{\text{im}}_{I_2}-j_1)=\\
&P(X_1=j_1-1,X_2=j_2-j_1-1,Y_1=1,Y_2=1)\mu^{312}_{j_1-1}(\sigma^{\text{im}}_{I_1}-1)\mu^{312}_{j_2-j_1-1}(\sigma^{\text{im}}_{I_2}-j_1),
\end{aligned}
\end{equation}
where   $\sigma^{\text{im}}_{I_1}$ is    a permutation image of $I_1=[2, j_1]$ and
 $\sigma^{\text{im}}_{I_2}$ is a permutation  image of $I_2=[j_1+2, j_2]$.
Continuing in this vein will give us
for all $(k_1,k_2)\in\mathbb{N}\times\mathbb{N}$,
\begin{equation}
\begin{aligned}
&\lim_{n\to\infty}\mu_n^{312}(\sigma^{\text{im}}_{I_1}*(k_1)*\sigma^{\text{im}}_{I_2}*(k_1+j_1-1+k_2)*\Sigma^{\text{im}}_{[k_1+k_2+j_2-1,n]}))=\\
&(\frac12)^{k_1+k_2}P(X_1=j_1-1)P(X_2=j_2-j_1-1)\mu^{312}_{j_1-1}(\sigma^{\text{im}}_{I_1}-1)\mu^{312}_{j_2-j_1-1}(\sigma^{\text{im}}_{I_2}-j_1)=\\
&P(X_1=j_1-1,X_2=j_2-j_1-1,Y_1=k_1,Y_2=k_2)\mu^{312}_{j_1-1}(\sigma^{\text{im}}_{I_1}-1)\mu^{312}_{j_2-j_1-1}(\sigma^{\text{im}}_{I_2}-j_1),
\end{aligned}
\end{equation}
where   $\sigma^{\text{im}}_{I_1}$ is    a permutation image of $I_1=[k_1+1,k_1+ j_1-1]$ and
 $\sigma_{I_2}$ is a permutation  image of $I_2=[k_1+j_1+k_2, k_1+k_2+j_2-2]$.\
This shows that a random variable whose distribution is that of a weakly convergent subsequence of $\{\mu_n^{312}\}_{n=1}^\infty$ must
be of the form $\Pi^{312}_{[T^Y_1+1,T^Y_1+T^X_1]}*(T^Y_1)*
\Pi^{312}_{[T^X_2+T^Y_1+1,T^X_2+T^Y_2]}*(T^Y_2+T^X_1)*Z$,
for some random $Z$ distributed on $S(\mathbb{N}-[1,T^X_2+2],\mathbb{N}^*)$.
The proof is completed by iterating  on this regenerative structure.
\hfill $\square$

\section{Proof of Theorem \ref{231}}\label{231pr}
 As noted at the end of the introductory section, we will give a rather verbal explanation of the  proof, the completely rigorous
proof following
via the same considerations and methods used in the proof of Theorem \ref{312}.
By Lemma \ref{13middle},
\begin{equation}\label{CC231}
\mu_n^{231}(\sigma^{-1}_n=j_1)=\frac{C_{j_1-1}C_{n-j_1}}{C_n},\ j_1\in[1,n].
\end{equation}
From the proof of \eqref{CC231} in Lemma \ref{13middle} it follows
that
\begin{equation}\label{CCcond231}
\mu_n^{231}(\sigma^{\text{im}}_{I_1}*(n)*\sigma^{\text{im}}_{I_2})|\sigma^{-1}_n=j_1)=\mu^{231}_{j_1-1}(\sigma^{\text{im}}_{I_1})\mu^{231}_{n-j_1}(\sigma^{\text{im}}_{I_2}-j_1+1),
\end{equation}
where $1\le j_1\le n$, $\sigma^{\text{im}}_{I_1}$ is    a permutation image of $I_1=[1,j_1-1]$ and
 $\sigma^{\text{im}}_{I_2}$ is a permutation  image of $I_2=[j_1, n-1]$.

From \eqref{CC231} and Lemma \ref{Catalanlemma} with  the remark following it, along with \eqref{X} and \eqref{Tsums},
it follows that as $n\to\infty$,  $\sigma^{-1}_n$ will be carried off to  $\infty$ with probability $\frac12$,  and  will converge
to the distribution $X_1+1$  with probability $\frac12$.  Consider the latter case. Then as $n\to\infty$,
the  position $\sigma^{-1}_n=j_1$ will converge in distribution to $X_1+1$, and by \eqref{CCcond231} the first $X_1$ positions will constitute
a uniformly random 231-avoiding  permutation of $[1, X_1]$. Thus, the initial segment of any weakly
convergent subsequence of $\{\mu_n^{231}\}_{n=1}^\infty$ looks like $\Pi^{231}_{[1,X_1]}*(\infty)=\Pi^{231}_{[1,T^X_1]}*(\infty)$.

Now consider the former case. Let $\sigma^{-1}_n=j_1$ be very large. The first $j_1-1$ positions
are occupied by a uniformly random 231-avoiding permutation of $[1,j_1-1]$.
Then in particular, the position of $j_1-1$ will satisfy
$\mu^{231}_n(\sigma^{-1}_{j_1-1}=j_2|\sigma^{-1}_n=j_1)=\frac{C_{j_2-1}C_{j_1-j_2-1}}{C_{j_1-1}}$,
for $j_2\in[1,j_1-1]$. Since $j_1$ is going to $\infty$ (as $n\to\infty$),
$\sigma^{-1}_{j_1-1}$ will be carried off to $\infty$ with probability $\frac12$ and will converge in distribution to
$X_1+1$
with probability $\frac12$. Consider the latter case. Then just as in the latter case in the previous paragraph, the
 initial segment of any weakly
convergent subsequence of $\{\mu_n^{231}\}_{n=1}^\infty$ will look like $\Pi^{231}_{[1,T^X_1]}*(\infty)$.
On the other hand, in the former case, we iterate the process we have just described. So far we have
assumed that the former case has prevailed twice.  Eventually, after say $i$ times in a row of the former case prevailing,
the latter case will finally prevail, and then as above it  will follow that the
initial segment of any weakly
convergent subsequence of $\{\mu_n^{231}\}_{n=1}^\infty$ looks like $\Pi^{231}_{[1,T^X_1]}*(\infty)$.
This process now regenerates on the rest of the domain, that is, on $[T^X_1+2,\infty)$, giving as the next piece, $\Pi^{231}_{[T^X_1+1,T^X_2]}*(\infty)$, as so on.
\hfill $\square$

\section{Proof of Theorem \ref{213}}\label{213pr}
As we noted at the end of the introductory section, we will give a rather verbal explanation of the  proof, the completely rigorous
proof following
via the same considerations and methods used in the proof of Theorem \ref{312}.
By Lemma \ref{13middle},
\begin{equation}\label{CC213}
\mu_n^{213}(\sigma^{-1}_1=j_1)=\frac{C_{j_1-1}C_{n-j_1}}{C_n},\ j_1\in[1,n].
\end{equation}
From the proof of \eqref{CC213} in Lemma \ref{13middle} it follows
that
\begin{equation}\label{CCcond213}
\mu_n^{213}(\sigma^{\text{im}}_{I_1}*(1)*\sigma^{\text{im}}_{I_2})|\sigma^{-1}_1=j_1)=\mu^{213}_{j_1-1}(\sigma^{\text{im}}_{I_1}-n+j_1-1)\mu^{213}_{n-j_1}(\sigma^{\text{im}}_{I_2}-1),
\end{equation}
where $1\le j_1\le n$, $\sigma^{\text{im}}_{I_1}$ is    a permutation image of $I_1=[n-j_1+2,n]$ and
 $\sigma^{\text{im}}_{I_2}$ is a permutation  image of $I_2=[2, n-j_1+1]$.
From \eqref{CC213} and Lemma \ref{Catalanlemma} with  the remark following it,
it follows that as $n\to\infty$,  with probability $\frac12$, $n-\sigma^{-1}_1$  will converge
in distribution to  $\hat X_1-1$, and   with probability $\frac12$, $\sigma^{-1}_1$ will converge in distribution to $X_1+1$.

Consider the latter case.
Then as $n\to\infty$,
the  position $\sigma^{-1}_1=j_1$ will converge in distribution to  $X_1+1$, and by \eqref{CCcond213}
the distribution of the  permutation image $\sigma^{\text{im}}_{I_1}$
of $I_1=[n-j_1+2,n]$ will converge to the degenerate distribution $\delta_{\infty^{(X_1)}}$.
Thus, in this case, the initial segment of any weakly convergent subsequence of $\{\mu_n^{213}\}_{n=1}^\infty$ looks like $\infty^{(X_1)}*(1)$.

Now consider the former case. By \eqref{CCcond213}, conditioned on
$\sigma_1^{-1}=j_1$, the final $n-j_1$ positions in the permutation are a random 213-avoiding permutation
image of $[2,n-j_1+1]$. Thus, since $n-\sigma^{-1}_1=n-j_1$ is converging in distribution to $\hat X_1-1$, and consequently
$\sigma^{-1}_1=j_1$ is converging in distribution to $\infty$,
it follows that the values $[1,\hat X_1]$ get swept away to $\infty$. Thus the support of any weakly convergent
subsequence of $\{\mu_n^{213}\}_{n=1}^\infty$ will be on functions in $S(\mathbb{N},\mathbb{N}^*-[1,\hat X_1])$.

Iterating the above scenarios, we see that with probability $\frac12$, the latter case
will prevail during the first $ Y_1^{(1)}$ iterations, then the former case will prevail
for the next $ Y^{(2)}_1$ iterations, then the latter case for the next $ Y_2^{(1)}$ iterations,
then the former for the next $ Y_2^{(2)}$ iterations, etc., while also with probability
$\frac12$,  the former case will prevail for the first $ Y^{(2)}_1$ iterations, then the latter
for the next $ Y^{(1)}_1$ iterations, etc.
These two possibilities, each with probability $\frac12$, are represented in the statement of the theorem
by the random variable $\chi_{0,1}$, with
$\chi_{0,1}=1$ if the first of these two  possibilities occurs.
Let's say that the first of these two possibilities occurs, the second possibility  being handled similarly.
Then the latter case prevails on the first $ Y_1^{(1)}$ iterations.
This results in the initial segment of any weakly convergent subsequence of $\{\mu^{213}\}_{n=1}^\infty$ looking
like $\infty^{(X_1)}*(1)*\infty^{(X_2)}*(2)*\cdots
\infty^{(X_{ Y^{(1)}_1})}*( Y^{(1)}_1)$.
After this, the former case prevails for $Y^{(2)}_1$ iterations. This causes the values
$[ Y^{(1)}_1+1, Y^{(1)}_1+T^{\hat X}_{Y^{(2)}_1}]$ to get swept out to $\infty$.
After this, the latter case prevails again for $Y^{(1)}_2$ iterations.
This results in the next segment of any weakly convergent subsequence looking like
$\infty^{(X_{ Y^{(1)}_1+1})}*( Y^{(1)}_1+T^{\hat X}_{Y^{(2)}_1}+1)*\cdots
\infty^{(X_{ Y^{(1)}_1+ Y^{(1)}_2})}*( Y^{(1)}_1+T^{\hat X}_{ Y^{(2)}_1}+ Y^{(1)}_2)$,
or equivalently, like
$\infty^{(X_{ Y^{(1)}_1+1})}*( Y^{(1)}_1+T^{\hat X}_{Y^{(2)}_1}+1)*\cdots
\infty^{(X_{T^{ Y^{(1)}}_2})}*(T^{ Y^{(1)}}_2+T^{\hat X}_{ Y^{(2)}_1})$.
In the notation of the theorem, we thus see in these two segments the beginning  of $\infty^{(J)}*I^{(1)}$,
revealed for $I^{(1)}$ up to
$\cup_{n=0}^1[T_n^{ Y^{(1)}}+T^{\hat X}_{T_n^{ Y^{(2)}}}+1,T_{n+1}^{ Y^{(1)}}+T^{\hat X}_{T_n^{ Y^{(2)}}}]=
[1, Y^{(1)}_1]\cup[ Y^{(1)}+T^{\hat X}_{ Y^{(2)}_1}+1,T^{ Y^{(1)}}_2+T^{\hat X}_{ Y^{(2)}_1}]$.
The above procedure now regenerates again and so on.
\hfill $\square$

\section{Proof of Proposition \ref{321}}\label{321pr}
As noted at the end of the introductory section, we will give a rather verbal explanation of the  proof, the completely rigorous
proof following
via the same considerations and methods used in the proof of Theorem \ref{312}.
Recall the definition of $S_n^{\text{b-irr};j}(321)$  from the proof of Lemma \ref{irr321}.
For $\sigma\in S_n(321)$, let $\mathcal{J}_n(\sigma)=\min\{j\ge1: \sigma\in S_n^{\text{b-irr};j}(321)\}$.
Then by \eqref{irrnj}, we have
\begin{equation}\label{key321a}
\mu^{321}_n(\sigma\in \mathcal{J}_n^{-1}(j))=\frac{C_{j-1}C_{n-j}}{C_n},\ 1\le j\le n, \ n\ge1.
\end{equation}
Also, by the considerations in the proof of Lemma \ref{irr321}, we have
\begin{equation}\label{key321b}
\begin{aligned}
&\mu^{321}_n(\tau*\nu^{\text{im}} |\sigma\in \mathcal{J}_n^{-1}(j))=\mu_j^{321;\text{b-irr}}(\tau)\mu_{n-j}^{321}(\nu^{\text{im}}-j),
\text{for}\ \tau\in S_j^{\text{b-irr}}(321)\\
&  \text{and}\ \nu^{\text{im}}\ \text{a 321-avoiding permutation image of}\ [j+1,n],
\end{aligned}
\end{equation}
where $\mu_j^{321;\text{b-irr}}$ denotes the uniformly probability measure on $S_j^{\text{b-irr}}(321)$.

From \eqref{key321a} and  Lemma \ref{Catalanlemma}, it follows
that
\begin{equation*}
\begin{aligned}
&\lim_{n\to\infty}\mu^{321}_n(\sigma\in \mathcal{J}_n^{-1}(j))=\frac12P(\hat X_1=j),\ j=1,2,\cdots;\\
&\lim_{M\to\infty}\lim_{n\to\infty}\mu^{321}_n(\sigma\in \mathcal{J}_n^{-1}\big([M,\infty)\big)=\frac12.
\end{aligned}
\end{equation*}
Using this with \eqref{key321b} shows that with probability $\frac12$, the distribution of any weakly convergent subsequence
of $\{\mu_n^{321}\}_{n=1}^\infty$ will begin with a segment whose distribution is that of
$\Pi^{321;\text{b-irr}}_{[1,\hat X]}$, and alternatively, with probability $\frac12$, if  a weakly convergent subsequence  converges to a limiting distribution on $S_\infty$, then that limiting distribution
is supported on permutations with no irreducible block. Using regeneration and iterating the above procedure proves the proposition.
\hfill $\square$

\section{Proof of Proposition \ref{stable}}\label{prstable}
We have $T_n^X=\sum_{j=1}^n X_j$, where  $\{X_n\}_{n=1}^\infty$ are IID with distribution given in \eqref{X}.
To prove the proposition, it suffices to show that $\lim_{n\to\infty}E\exp(-it\frac{T_n^X}{n^2})$ is equal to the characteristic function appearing in the statement of the proposition.
We have
\begin{equation}\label{charfunc}
E\exp(-it\frac{T_n^X}{n^2})=\big(E\exp(-i\frac t{n^2} X_1)\big)^n,
\end{equation}
and
\begin{equation}\label{withs}
E\exp(-is X_1)=\frac12\sum_{n=0}^\infty e^{-isn}\frac{C_n}{4^n}.
\end{equation}
By the remark after \eqref{X}, it follows that
$$
\frac{1-\sqrt{1-4z}}{2z}=\sum_{n=0}^\infty C_nz^n
$$
defines an analytic function for   $|z|<\frac14$, and that the equality continues to hold for $|z|=\frac14$,
where $\sqrt{w}=|w|^\frac12\exp(\frac12i\text{Arg}(w))$, for Re$(w)>0$ and Arg$(w)\in(-\frac\pi2,\frac\pi2)$.
Thus, from \eqref{withs} with $s=\frac t{n^2}$, we have
\begin{equation}\label{withtn2}
E\exp(-i\frac t{n^2} X_1)=\frac{1-\sqrt{1-\exp(-i\frac t{n^2}})}{\exp(-i\frac t{n^2})}.
\end{equation}
Writing  $1-\exp(-i\frac t{n^2})=1-\cos\frac t{n^2}+i\sin\frac t{n^2}$,     we see that
$$
1-\exp(-i\frac t{n^2})=\frac {t^2}{n^4}+i\frac t{n^2}+  O(\frac1{n^6}),\ \text{as}\ n\to\infty.
$$
Consequently,
\begin{equation}\label{complex}
\sqrt{1-\exp(-i\frac t{n^2})}=(1+o(1))\frac{|t|^\frac12}n\exp\big(i\thinspace\text{sgn}(t)(\frac\pi4+o(1)\big),\ \text{as}\ n\to\infty.
\end{equation}
From \eqref{charfunc}, \eqref{withtn2} and \eqref{complex}, we have
\begin{equation}
\begin{aligned}
&\lim_{n\to\infty}E\exp(-it\frac{T_n^X}{n^2})=\\
&\lim_{n\to\infty}\exp(i\frac tn)\Big(1-\frac1n(1+o(1))|t|^\frac12\exp\big(i\thinspace\text{sgn}(t)(\frac\pi4+o(1)\big)  \Big)^n=\\
&\exp\big(-|t|^\frac12\exp(i\thinspace\text{sgn}(t)\frac\pi4)\big)=\exp\big(-\frac{\sqrt2}2|t|^\frac12(1+i\thinspace\text{sgn}(t))\big).
\end{aligned}
\end{equation}

\end{document}